\numberwithin{equation}{section}
\newtheorem{theorem}{Theorem}[section]
\newtheorem{lemma}[theorem]{Lemma}
\newtheorem{remark}[theorem]{Remark}
\newtheorem{example}[theorem]{Example}
\newtheorem{definition}[theorem]{Definition}
\newcommand{\C}{{\ensuremath{\mathbb{C}}}}
\newcommand{\ind}{{\ensuremath{\rm Ind}}}
\newcommand{\rk}{{\ensuremath{\rm rk}}}
\newcommand{\rank}{{\ensuremath{\rm rk}}}
\newcommand{\odagger}{\mathrel{\text{\textcircled{$\dagger$}}}}
\begin{document}

\title{Simultaneous extension of generalized BT-inverses and core-EP inverses}

\author{Abdessalam Kara\thanks{ Faculty of Mathematics and
  Computer Sciences, Department of Mathematics, University of Batna 2, Batna, Algeria., E-mail: \texttt{abdessalam.kara@univ-batna.dz}.},
  N\'estor Thome\thanks{Instituto Universitario de Matem\'atica Multidisciplinar, Universitat Polit\`ecnica de
Val\`encia, 46022 Valencia, Spain. E-mail: \texttt{njthome@mat.upv.es}.},
Dragan Djordjević\thanks{University of Nis, Faculty of Sciences and Mathematics, Serbia. E-mail: \texttt{dragandjordjevic70@gmail.com}.}}
\maketitle

\begin{abstract}
In this paper we introduce the generalized inverse of complex square matrix with respect to other matrix having same size. Some of its representations, properties and characterizations are obtained. Also some new representation matrices of $W$-weighted BT-inverse and $W$-weighted core-EP inverse are determined as well as characterizations of generalized inverses $A^{\odagger}$$,A^{\odagger,W}$$, A^{\diamond}, A^{\diamond,W}$.
\end{abstract}
\maketitle
\makeatletter
\renewcommand\@makefnmark%
{\mbox{\textsuperscript{\normalfont\@thefnmark)}}}
\makeatother

\section{Introduction and preliminary}In this paper ${\mathbb C}^{m\times n}$ stands for the set of all complex matrices of size $m\times n$. The symbols $R(A), N(A), A^* \textit{ and }$$\rk(A)$ will denote the range space, the null space, the conjugate transpose and the rank of matrix $A\in \C^{m,n}$. Let $A\in {\mathbb C}^{m \times n}$, the Moore-Penrose inverse of $A$, denoted by $A^\dag$, is the unique matrix $X\in \C^{n\times m}$ satisfying the equations
$$ \textit{(i) } AXA=A, \textit{(ii) } XAX=X, \textit{(iii) }(AX)^*=AX, \textit{(iv) } (XA)^*=XA.$$
From this inverse we put $P_A=AA^\dag \textit{ and } Q_A= A^\dag A$. The class of matrices satisfying $(i)$ is denoted by $A\{1 \}$, also the class of matrices satisfying $(ii)$ is denoted by $A\{2 \}$. The index of a given square matrix $A$, denoted by $\ind(A)$, is the smallest nonnegative integer $k$ satisfying $\rk(A^{k+1})=\rk(A^{k})$. The Drazin inverse $A^D$ of square matrix $A\in \C^{n\times n}$ with $\ind(A)=k$ is defined as the unique solution of the equations
$$XAX=X,\textit{  } AX=XA,\textit{  } XA^{k+1}=A^{k}.$$
The core-EP inverse of square matrix $A\in \C^{n\times n}$ with $\ind(A)=k$ is given by $A^{\odagger}=A^k( (A^*)^k A^{k+1})^\dag (A^*)^k$ (see \cite{PrMo}), which is the unique solution of these equations
$$XAX=X,\textbf{  }(AX)^*=AX,\textit{  }XA^{k+1}=A^{k}.$$
For a given rectangular matrix  $W \in \C^{m\times n} $, the authors Ferreyra, Levis and Thome extended in \cite{FeLeTh1} the notion of core-EP inverse to the $W$- weighted core-EP inverse for rectangular matrix $ A \in \C^{m\times n}$, denoted by $A^{\odagger,W}$ and it is characterized as the unique solution of
$$ WAWX=P_{(WA)^k} \textit{,   } R(X)\subset R((AW)^k).$$
where $k= \max\{\ind(AW), \ind(WA)\}$. 
For a square matrix $A \in \C^{n\times n}$, the BT-generalized inverse of $A$ is defined by $A^{\diamond}=({AP_A})^\dag$ in \cite{BaTr2}, and extended by Ferreyra, Thome, Torigino (see \cite{FeThTo}) to new generalized inverse for rectangular matrix $A \in \C^{m\times n}$, called the $W$-weighted BT-inverse of $A$ where $W\in C^{m\times n}$ and it is given by $A^{\diamond,W}=({WAWAW(AW)^\dag})^\dag$. Moreover, it is the unique solution of the system of conditions
$$ AWX=AW{(WAWP_{AW})}^\dag \textit{,   } R(X)\subset R(P_{AW}(WAW)^*).$$
We need the following auxiliary lemmas:
\begin{lemma}\label{a}\cite[Theorem 4.1]{FeLeTh1}
    Let $B\in {\mathbb C}^{n \times m}$ be a nonzero matrix, $A\in {\mathbb C}^{m \times n}$ and $k= \max\{\ind(AB), \ind(BA)\}$. Then there exist two unitary matrices $U \in {\mathbb C}^{m \times m}$, $V \in {\mathbb C}^{n \times n}$, two nonsingular matrices $A_{1}, B_{1}\in {\mathbb C}^{t\times t}$ and two matrices $A_{2}\in {\mathbb C}^{m-t\times n-t}$ and $B_{2}\in {\mathbb C}^{n-t\times m-t}$ such that $A_{2}B_{2}$ and $B_{2}A_{2}$ are nilpotent of indices $\ind(AB)$ and $\ind(BA)$, respectively, with:
\begin{equation}\label{TCF1}
    A=U\left[\begin{array}{cc}
   A_{1}  & A_{12} \\
    0 & A_{2}
\end{array}\right]V^*
\textit{ and }
B=V\left[\begin{array}{cc}
   B_{1}  & B_{12} \\
    0 & B_{2}
\end{array}\right]U^*
\end{equation}
\end{lemma}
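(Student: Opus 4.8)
The plan is to extract the decomposition from a pair of orthonormal bases adapted to the ``stable ranges'' of the products $AB$ and $BA$. Put $\mathcal S_1:=R\big((AB)^k\big)\subseteq\C^m$ and $\mathcal S_2:=R\big((BA)^k\big)\subseteq\C^n$. Because $k\geq\ind(AB)$ and $k\geq\ind(BA)$, the nested chains $R(AB)\supseteq R((AB)^2)\supseteq\cdots$ and $R(BA)\supseteq R((BA)^2)\supseteq\cdots$ are stationary from the $k$-th term on (equal dimensions in a nested chain force equality), so $R\big((AB)^j\big)=\mathcal S_1$ and $R\big((BA)^j\big)=\mathcal S_2$ for all $j\geq k$. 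If $k=0$ then $AB$ and $BA$ are both nonsingular, which forces $m=n$ and $A,B$ nonsingular, and the statement holds trivially with $t=n$; so from now on assume $k\geq1$.

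The first and main step is to show that $A$ and $B$ interchange these two subspaces isomorphically, that is, $A\mathcal S_2=\mathcal S_1$, $B\mathcal S_1=\mathcal S_2$, and the restrictions $A|_{\mathcal S_2}\colon\mathcal S_2\to\mathcal S_1$ and $B|_{\mathcal S_1}\colon\mathcal S_1\to\mathcal S_2$ are bijective. From the identity $(AB)^kA=A(BA)^k$ one gets $A\mathcal S_2=R\big(A(BA)^k\big)=R\big((AB)^kA\big)\subseteq R\big((AB)^k\big)=\mathcal S_1$. For the opposite inclusion I would use $\mathcal S_1=R\big((AB)^{2k}\big)$ together with the factorisation $(AB)^{2k}=A(BA)^{2k-1}B$, which give $\mathcal S_1\subseteq R\big(A(BA)^{2k-1}\big)\subseteq R\big(A(BA)^k\big)=A\mathcal S_2$; hence $A\mathcal S_2=\mathcal S_1$ and, by symmetry, $B\mathcal S_1=\mathcal S_2$. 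Consequently $BA$ maps $\mathcal S_2$ onto $\mathcal S_2$ and is therefore bijective on it, which forces $A|_{\mathcal S_2}$ to be injective, hence bijective; the same argument handles $B|_{\mathcal S_1}$. In particular $\dim\mathcal S_1=\dim\mathcal S_2$; denote this common value by $t$, so that $t=\rk\big((AB)^k\big)=\rk\big((BA)^k\big)$.

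Next I would pick unitary matrices $U=[\,U_1\mid U_2\,]\in\C^{m\times m}$ and $V=[\,V_1\mid V_2\,]\in\C^{n\times n}$ whose first $t$ columns constitute orthonormal bases of $\mathcal S_1$ and $\mathcal S_2$, respectively, and compute $U^{*}AV$ and $V^{*}BU$ blockwise. The $(2,1)$ block of $U^{*}AV$ is $U_2^{*}AV_1$, and it vanishes because the columns of $AV_1$ lie in $A\mathcal S_2=\mathcal S_1=R(U_1)$, which is orthogonal to $R(U_2)$; symmetrically the $(2,1)$ block $V_2^{*}BU_1$ of $V^{*}BU$ vanishes since $BU_1$ has its columns in $B\mathcal S_1=\mathcal S_2=R(V_1)$. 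This is precisely the shape \eqref{TCF1}, with $A_1=U_1^{*}AV_1$, $B_1=V_1^{*}BU_1\in\C^{t\times t}$, $A_{12}=U_1^{*}AV_2$, $B_{12}=V_1^{*}BU_2$, $A_2=U_2^{*}AV_2\in\C^{(m-t)\times(n-t)}$ and $B_2=V_2^{*}BU_2\in\C^{(n-t)\times(m-t)}$. Moreover $A_1$ is the matrix of the isomorphism $A|_{\mathcal S_2}$ relative to the chosen orthonormal bases, and $B_1$ that of $B|_{\mathcal S_1}$, so both $A_1$ and $B_1$ are nonsingular.

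It remains to identify $A_2B_2$ and $B_2A_2$ as nilpotent of the asserted indices. Multiplying the two block forms, $AB$ and $BA$ become block upper triangular with diagonal blocks $(A_1B_1,\,A_2B_2)$ and $(B_1A_1,\,B_2A_2)$, so for every $j\geq1$ the matrix $(AB)^{j}$ is block upper triangular with diagonal blocks $(A_1B_1)^{j}$ and $(A_2B_2)^{j}$. Since $A_1B_1$ is nonsingular of order $t$, a rank count along the block triangle yields $\rk\big((AB)^{j}\big)=t+\rk\big((A_2B_2)^{j}\big)$; comparing with $\rk\big((AB)^{k}\big)=t$ gives $(A_2B_2)^{k}=0$, and the same rank identity shows that the least $j$ at which $\rk\big((A_2B_2)^{j}\big)$ stabilises equals $\ind(AB)$, i.e. $A_2B_2$ is nilpotent of index $\ind(AB)$. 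Interchanging the roles of $A$ and $B$ gives the corresponding statement for $B_2A_2$, which completes the proof. The genuine obstacle is the second paragraph: establishing that $A$ and $B$ carry the stable ranges onto one another isomorphically — this is exactly what underlies the equality $\rk((AB)^{k})=\rk((BA)^{k})$ that makes the two block decompositions compatible; once that is in hand, everything else follows by choosing adapted orthonormal bases and reading off blocks.
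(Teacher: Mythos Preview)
The paper does not supply a proof of this lemma; it is quoted verbatim from \cite[Theorem~4.1]{FeLeTh1} and used as a black box, so there is no in-paper argument to compare against. Judged on its own merits your proof is correct: the key step---showing that $A$ and $B$ restrict to mutually inverse isomorphisms between the stable ranges $R((BA)^k)$ and $R((AB)^k)$---is handled cleanly via $(AB)^kA=A(BA)^k$ and the stability $R((AB)^{2k})=R((AB)^k)$, and from there the block form and the nilpotency indices follow exactly as you say from the rank identity $\rk((AB)^j)=t+\rk((A_2B_2)^j)$ for block upper triangular matrices with invertible top-left block. This is the standard route to the simultaneous core--EP decomposition and is almost certainly what appears in the cited reference.
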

This is known as the core-EP decomposition of the pair $\{A,B\}$.
\begin{lemma} \cite{MoKo}\label{a2}
    Let $C=U\left[\begin{array}{cc}
   C_1  & C_2 \\
    0 & C_3
\end{array}\right]V^*$ such that $C_1\in {C^{t\times t}}$ is nonsingular and $U \in {\mathbb C}^{m \times m}$, $V \in {\mathbb C}^{n \times n}$ are unitary. Then
\begin{equation}\label{TCF2}
    C^\dag =V\left[ \begin{array}{cc}
        C^*_1\Omega &  -C^*_1\Omega C_2 C_3^\dag\\
     (I_{n-t} -Q_{C_3})C_2^*\Omega & C_3^\dag-(I_{n-t} -Q_{C_3})C_2^* \Omega C_2C_3^\dag
\end{array}\right]U^*
\end{equation}
and
\begin{equation}\label{TCF3}
   P_C= CC^\dag =U\left[ \begin{array}{cc}
        I_t & 0\\
     0 &  P_{C_3}
\end{array}\right]U^*
\end{equation}
where $\Omega= (C_1C^*_1+ C_2((I_{n-t} -Q_{C_3})C_2^*)^{-1}$.

\end{lemma}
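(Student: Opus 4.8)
\medskip
\noindent\emph{Proof proposal.} Write $M$ for the block upper-triangular factor, so that $C=UMV^*$ with $U,V$ unitary; then $C^\dag=VM^\dag U^*$ and $P_C=CC^\dag=U(MM^\dag)U^*$, so it suffices to compute $M^\dag$ and $MM^\dag$. I would begin by setting $N:=C_2(I_{n-t}-Q_{C_3})$ and noting that, since $Q_{C_3}=C_3^\dag C_3$ is a Hermitian idempotent, so is $I_{n-t}-Q_{C_3}$; hence $NN^*=C_2(I_{n-t}-Q_{C_3})C_2^*$ and the matrix $\Omega^{-1}=C_1C_1^*+NN^*$ is Hermitian positive definite (a Hermitian positive definite summand $C_1C_1^*$, since $C_1$ is nonsingular, plus a Hermitian positive semidefinite one). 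In particular $\Omega$ is well defined and Hermitian. Along the way I would record the standard Moore--Penrose identities $C_3C_3^\dag C_3=C_3$, $C_3^\dag C_3C_3^\dag=C_3^\dag$, $C_3(I_{n-t}-Q_{C_3})=0$ and $C_3^\dag P_{C_3}=C_3^\dag$, together with the key relation $C_2N^*=NN^*$.

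The plan is then to let $X$ be the bracketed matrix on the right-hand side of \eqref{TCF2} and to verify the four Penrose equations for $M$. A block multiplication, using $(C_1C_1^*+NN^*)\Omega=I_t$, $C_2N^*=NN^*$ and the identities above, yields
\[
MX=\begin{bmatrix} I_t & 0\\ 0 & P_{C_3}\end{bmatrix},\qquad
XM=\begin{bmatrix} C_1^*\Omega C_1 & C_1^*\Omega N\\ N^*\Omega C_1 & N^*\Omega N+Q_{C_3}\end{bmatrix}.
\]
Both are Hermitian --- the first because $P_{C_3}^*=P_{C_3}$, the second because $\Omega$ and $Q_{C_3}$ are Hermitian and the two off-diagonal blocks are conjugate transposes of each other --- which is exactly Penrose conditions (iii) and (iv). Condition (i) is then immediate, $MXM=(MX)M=M$ since $P_{C_3}C_3=C_3$; and condition (ii), $XMX=X(MX)=X$, reduces to the single identity $C_3^\dag P_{C_3}=C_3^\dag$ already noted. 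Hence $M^\dag=X$, which is \eqref{TCF2}, and reading off $MM^\dag=MX$ and conjugating by $U$ gives \eqref{TCF3}.

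The computation is essentially bookkeeping with the standard identities for $C_3^\dag$; the one spot that genuinely needs care is the vanishing of the top-right block of $MX$, namely $-C_1C_1^*\Omega C_2C_3^\dag+C_2C_3^\dag-C_2N^*\Omega C_2C_3^\dag=0$, which works precisely because $C_2N^*=NN^*$ (equivalently, $C_2C_3^\dag$ and $C_2(I_{n-t}-Q_{C_3})$ "reassemble" $C_2$) combined with $(C_1C_1^*+NN^*)\Omega=I_t$. I expect that cancellation, together with the preliminary check that $\Omega^{-1}$ is invertible, to be the main --- and still fairly mild --- obstacle.
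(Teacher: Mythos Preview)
Your verification is correct: the block computations of $MX$ and $XM$ go through exactly as you describe, using $C_2N^*=NN^*$, $C_3(I_{n-t}-Q_{C_3})=0$ and $C_3^\dag P_{C_3}=C_3^\dag$, and all four Penrose conditions follow. There is nothing to compare against, however, since the paper does not prove this lemma at all --- it is quoted from \cite{MoKo} without argument --- so your direct check of the Moore--Penrose equations simply supplies the omitted proof.
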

This paper is organized as follows. In Section 2 we introduce the generalized inverse of square matrix with respect to other matrix having same size and its some representations. In Section 3 we give some properties of generalized inverse of square matrix with respect to other. Section 4 contains some characterizations of generalized inverse of square matrix with respect to other matrix. In Section 5 some new representation matrices of $W$-weighted BT-inverse and $W$-weighted core-EP inverses are obtained as well as other characterizations of $A^{\odagger},A^{\odagger,W}, A^{\diamond}, A^{\diamond,W}$.
\section{Definition of $A^{(B)}$ and some representations}
 We start this section with a theorem by giving a decomposition of square matrix by other of same size, for $A, B \in {\mathbb C}^{n \times n}$ we called $B$-decomposition of $A$ or decomposition of $A$ with respect to $B$.
\begin{theorem} \cite{Th}\label{a3}
    Let $A, B \in {\mathbb C}^{n \times n}$ with $\rk(A)=r$ and $\rk(B)=s$. Then there exist unitary matrices $U,V \in {\mathbb C}^{n \times n}$ and $A$, $B$ can be represented as follows
\begin{equation}\label{TCF4}
A= U\left[\begin{array}{cc}
\Sigma_A A_1 & \Sigma_A A_2 \\
0 & 0 
\end{array}
\right]V^* 
\qquad \text{ and } \qquad 
B= V\left[\begin{array}{cc}
\Sigma_B B_1\ &\Sigma_B B_2\\
 0  & 0 
\end{array}
\right]U^*
\end{equation}  
where $\Sigma_A \in \C^{r \times r}$ and $\Sigma_B\in \C^{s \times s}$ are diagonal positive definite matrices, blocks $A_1\in \C^{r \times s}$, $A_2\in \C^{r \times (n-s)}$, $B_1\in \C^{s \times r}$ and $B_2\in \C^{s\times (n-r) }$ satisfy $A_1 A_1^* + A_2 A_2^*=I_r$ and $B_1B_1^*  + B_2B_2^* =I_s$.\\

\end{theorem}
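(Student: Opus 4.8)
The plan is to build the two unitary matrices $U$ and $V$ one after the other, using a singular-value type argument on $A$ first and then a QR/polar-type argument to accommodate $B$. First I would apply a thin singular value decomposition to $A$: write $A = \widetilde U \,\mathrm{diag}(\Sigma_A,0)\, \widetilde V^*$ with $\Sigma_A\in\C^{r\times r}$ diagonal and positive definite, where $\widetilde U,\widetilde V\in\C^{n\times n}$ are unitary and the zero blocks have the sizes dictated by $\rk(A)=r$. At this stage $A$ already has the block shape $\widetilde U\left[\begin{smallmatrix}\Sigma_A & 0\\ 0 & 0\end{smallmatrix}\right]\widetilde V^*$, which is a special case of the claimed form with $A_1=[I_r\ 0]$-type data; the point of the theorem is that we are still free to post-multiply on the $V$-side and pre-multiply on the $U$-side by block-diagonal unitaries without destroying this shape, and we must spend that freedom on $B$.

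Next I would take $U:=\widetilde U$ and look for $V$. Consider the matrix $B\widetilde U$ and partition its columns according to the splitting $n = r + (n-r)$ coming from $U$; more importantly, I want to choose $V$ so that the left factor of $B$ becomes $V\left[\begin{smallmatrix}\Sigma_B B_1 & \Sigma_B B_2\\ 0 & 0\end{smallmatrix}\right]$ with the top block rows spanning $R(B^*)^{\perp}$-complement appropriately. Concretely: let $\Sigma_B$ come from the singular values of $B$, and write $B = P\left[\begin{smallmatrix}\Sigma_B & 0\\0&0\end{smallmatrix}\right]Q^*$ by SVD. The constraint is that the \emph{same} $U$ must appear on the right of $B$ and (via $\widetilde V$, up to a correction) on the right of $A$. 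So I would set $V$ to be $P$ composed with a block-diagonal unitary chosen to reconcile $Q^*$ with the required $U=\widetilde U$ on the right of $B$. The freedom to choose a unitary $W_1\in\C^{r\times r}$ and $W_2\in\C^{(n-r)\times(n-r)}$ acting as $\mathrm{diag}(W_1,W_2)$ on the $\widetilde V$-side of $A$ leaves $A$'s displayed form intact (it only changes $A_1,A_2$ to $A_1 W_1^*$-type blocks, still satisfying $A_1A_1^*+A_2A_2^*=I_r$ because that identity is exactly the statement that the top block row is a partial isometry, i.e. has orthonormal rows); I would use precisely this freedom to force the $U$ appearing on the right of $B$ to coincide with $\widetilde U$.

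The normalization conditions $A_1A_1^*+A_2A_2^*=I_r$ and $B_1B_1^*+B_2B_2^*=I_s$ are then automatic: after factoring out the positive definite diagonal part $\Sigma_A$ (resp. $\Sigma_B$), the remaining $r\times n$ block $[A_1\ A_2]$ is the top block of a unitary matrix (the product of two unitaries), hence has orthonormal rows, which is exactly $A_1A_1^*+A_2A_2^*=I_r$; similarly for $B$. The sizes $A_1\in\C^{r\times s}$, $A_2\in\C^{r\times(n-s)}$, $B_1\in\C^{s\times r}$, $B_2\in\C^{s\times(n-r)}$ are forced by the requirement that the partitioning of the right factor of $A$ (which is $V^*$) match the left-factor partitioning of $B$ (which is $V$), and vice versa for $U$: the columns of $V$ split as $s+(n-s)$, the columns of $U$ split as $r+(n-r)$.

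The main obstacle I anticipate is the reconciliation step: once $U=\widetilde U$ is fixed by the SVD of $A$, one is \emph{not} free to choose $V$ arbitrarily — it must simultaneously (i) keep $A$ in the displayed upper-block form and (ii) put $B$ in its displayed form with that same $U$ on the right. The resolution is that condition (i) only pins down $V$ up to right multiplication by $\mathrm{diag}(W_1,W_2)$ with $W_1,W_2$ arbitrary unitary, and this is a large enough group: picking $V = \widetilde V\,\mathrm{diag}(W_1,W_2)$, the matrix $B$ becomes $V$ times (something) times $\mathrm{diag}(W_1,W_2)^*\widetilde V^* B^{?}$— one checks that $\mathrm{diag}(W_1^*,W_2^*)\widetilde V^* $ can be arranged, by an SVD of the relevant $s\times n$ block of $\widetilde V^* B^*$ hmm, more cleanly: write the left factor of $B$ as (unitary)$\cdot\mathrm{diag}(\Sigma_B,0)\cdot U^*$ and observe that requiring $U=\widetilde U$ determines the leftmost unitary as $V$ up to the stabilizer of the block form, which is precisely $\mathrm{diag}(W_1',W_2')$; these residual parameters are then absorbed into redefining $B_1,B_2$ (and, through the $\mathrm{diag}(W_1,W_2)$ acting on $A$, into redefining $A_1,A_2$), with the partial-isometry identities preserved throughout. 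I would present this as: apply Lemma/SVD to $A$ to get $U$; then the condition "$B$ has rank $s$ and the displayed right factor $U^*$" determines $V$ as the unitary in a suitable decomposition of $B$, and a final block-diagonal unitary adjustment makes everything consistent — the identities on $A_i,B_i$ then hold by construction.

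\bigskip

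Before reading the authors' proof I would also double-check the degenerate cases $r=0$, $s=0$, $r=n$, or $s=n$, where some blocks are empty; the statement and the partial-isometry identities remain valid with the usual conventions for empty matrices, and the construction above specializes without trouble.
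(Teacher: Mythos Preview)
The paper does not prove this theorem; it is quoted from \cite{Th} without proof, so there is no ``paper's own proof'' to compare against. I will assess your argument on its own merits.

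Your overall strategy---take the SVDs of $A$ and of $B$ and stitch them together---is the right one, but there is a genuine gap in the execution. After writing $A=\widetilde U\,\mathrm{diag}(\Sigma_A,0)\,\widetilde V^*$, you assert that the only freedom preserving $A$'s displayed form is to replace $\widetilde V$ by $\widetilde V\,\mathrm{diag}(W_1,W_2)$ with $W_1\in\C^{r\times r}$ and $W_2\in\C^{(n-r)\times(n-r)}$. That is too restrictive, and with only that freedom the reconciliation step genuinely fails: for instance, when $s\le r$ the first $s$ columns of $V=\widetilde V\,\mathrm{diag}(W_1,W_2)$ necessarily lie in the span of the first $r$ columns of $\widetilde V$, i.e.\ in $R(A^*)$; but the first $s$ columns of $V$ must span $R(B)$, and there is no reason to have $R(B)\subseteq R(A^*)$.

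The missing observation is that the target form for $A$ is not $\mathrm{diag}(\Sigma_A,0)$ but
\[
\left[\begin{array}{cc}\Sigma_A A_1 & \Sigma_A A_2\\ 0 & 0\end{array}\right],
\]
whose only constraint is that the last $n-r$ rows vanish. Consequently \emph{every} unitary $V$ works on the right of $A$: just set $[A_1\ A_2]:=[\,I_r\ \ 0\,]\,\widetilde V^{\,*}V$, and then $A_1A_1^*+A_2A_2^*=[\,I_r\ \ 0\,]\widetilde V^{\,*}VV^*\widetilde V\,[\,I_r\ \ 0\,]^*=I_r$ automatically. With this in hand the ``main obstacle'' you flag evaporates: take $U$ to be the left unitary in an SVD of $A$, take $V$ to be the left unitary in an SVD of $B$, and define $[A_1\ A_2]$ and $[B_1\ B_2]$ by absorbing the respective \emph{right} SVD unitaries against $V$ and $U$. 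No block-diagonal adjustment or further reconciliation is needed. Your final paragraph gestures toward absorbing residual parameters into the $A_i,B_i$, but because you never leave the block-diagonal ansatz the argument as written does not go through.
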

\begin{definition}
Let $A, B \in {\mathbb C}^{n \times n}$. The matrix $A^{(B)}=(ABB^\dag)^\dag$ is called the generalized inverse of $A$ with respect to $B$.
\end{definition}

\begin{remark}
If $B=A$, we recover the BT-generalized inverse since $A^{(A)}=(A^2A^\dag)^\dag=A^{\diamond}$. Also if $B=A^k$ with $k=\ind (A)$  we recover the core-EP inverse because $A^{(A^k)}= (AA^k(A^k)^\dag)^\dag= (A^{k+1}(A^k)^\dag)^\dag=A^{\odagger}$.
\end{remark}
Now we present a canonical form for the generalized inverse of $A$ with respect to $B$ by using the $B$-decomposition of $A$.
\begin{theorem}
Let $A, B \in {\mathbb C}^{n \times n}$ written as in (\ref{TCF4}) with $\rk(A)=r$. Then 
\begin{equation}\label{TCF6}
A^{(B)} = V\left[\begin{array}{cc}
(\Sigma_A A_1)^\dag & 0 \\
0 & 0 
\end{array}
\right] U^*.
\end{equation}
\end{theorem}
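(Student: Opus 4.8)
The plan is to compute $A^{(B)} = (ABB^\dagger)^\dagger$ directly by substituting the $B$-decomposition \eqref{TCF4}. First I would form the product $BB^\dagger$. Since $B = V\left[\begin{smallmatrix}\Sigma_B B_1 & \Sigma_B B_2\\ 0 & 0\end{smallmatrix}\right]U^*$ has its nonzero block in the top $s$ rows, and $\Sigma_B$ is positive definite, one checks that $R(B) = V(\C^s \oplus \{0\})$, so that $BB^\dagger = P_{R(B)} = V\left[\begin{smallmatrix} I_s & 0\\ 0 & 0\end{smallmatrix}\right]V^*$. (If one prefers, this also follows from Lemma~\ref{a2} applied with $C_1 = \Sigma_B B_1$ in a suitable partition, but since the bottom block rows vanish it is immediate that the orthogonal projector onto $R(B)$ is block-diagonal with an $I_s$ in the first slot.)

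Next I would multiply: $ABB^\dagger = U\left[\begin{smallmatrix}\Sigma_A A_1 & \Sigma_A A_2\\ 0 & 0\end{smallmatrix}\right]V^* \cdot V\left[\begin{smallmatrix} I_s & 0\\ 0 & 0\end{smallmatrix}\right]V^* = U\left[\begin{smallmatrix}\Sigma_A A_1 & 0\\ 0 & 0\end{smallmatrix}\right]V^*$. So $ABB^\dagger$ is a matrix of the form $U D V^*$ with $U,V$ unitary and $D = \left[\begin{smallmatrix}\Sigma_A A_1 & 0\\ 0 & 0\end{smallmatrix}\right]$ block-diagonal. The Moore–Penrose inverse then satisfies $(UDV^*)^\dagger = V D^\dagger U^*$, and for a block-diagonal matrix $D^\dagger = \left[\begin{smallmatrix}(\Sigma_A A_1)^\dagger & 0\\ 0 & 0\end{smallmatrix}\right]$ (the four Penrose equations decouple across the blocks). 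This yields exactly \eqref{TCF6}.

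The only point that requires a little care — and the main obstacle, though it is mild — is justifying that $(ABB^\dagger)^\dagger$ really does collapse to $V\left[\begin{smallmatrix}(\Sigma_A A_1)^\dagger & 0\\ 0 & 0\end{smallmatrix}\right]U^*$ rather than picking up cross terms. This is where one uses that the off-diagonal and bottom blocks of $ABB^\dagger$ are genuinely zero (not merely small), which is a consequence of $BB^\dagger$ being the block-diagonal projector onto $R(B) = V(\C^s\oplus\{0\})$ together with the fact that $A$'s representation in \eqref{TCF4} already has vanishing bottom block rows in the $U$-$V$ bases. Once that is in hand, the invariance of the Moore–Penrose inverse under multiplication by unitary matrices on left and right, and its behaviour on block-diagonal matrices, finish the argument with no further computation.
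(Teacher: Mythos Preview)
Your proposal is correct and follows essentially the same route as the paper: both compute $BB^\dagger = V\left[\begin{smallmatrix} I_s & 0\\ 0 & 0\end{smallmatrix}\right]V^*$, multiply to obtain $ABB^\dagger = U\left[\begin{smallmatrix}\Sigma_A A_1 & 0\\ 0 & 0\end{smallmatrix}\right]V^*$, and then take the Moore--Penrose inverse of this block-diagonal form. The only cosmetic difference is that the paper first writes out $B^\dagger$ explicitly (using $B_1B_1^* + B_2B_2^* = I_s$) before forming $BB^\dagger$, whereas you identify $BB^\dagger$ directly as the orthogonal projector onto $R(B)$.
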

\begin{proof}

From (\ref{TCF4}) it follows
\[
B^\dag = U\left[\begin{array}{cc}
B_1^* \Sigma_B^{-1}  & 0 \\
B_2^* \Sigma_B^{-1}  & 0 
\end{array}
\right]V^*.
\]

Some computations yield
\begin{eqnarray*}
ABB^\dag & = &  U\left[\begin{array}{cc}
\Sigma_A A_1 & \Sigma_A A_2 \\
0 & 0 
\end{array}
\right] \left[\begin{array}{cc}
\Sigma_B B_1 & \Sigma_B B_2 \\
0 & 0 
\end{array}
\right]\left[\begin{array}{cc}
B_1^* \Sigma_B^{-1}  & 0 \\
B_2^* \Sigma_B^{-1}  & 0 
\end{array}
\right]V^* \\
 & = &  
U\left[\begin{array}{cc}
\Sigma_A A_1 & \Sigma_A A_2 \\
0 & 0 
\end{array}
\right] \left[\begin{array}{cc}
I_s   & 0 \\
0  & 0 
\end{array}
\right]V^* \\
& = & U\left[\begin{array}{cc}
\Sigma_A A_1 & 0 \\
0 & 0 
\end{array}
\right] V^*.
\end{eqnarray*}
Then,
$A^{(B)} = (ABB^\dag)^\dag= V\left[\begin{array}{cc}
(\Sigma_A A_1)^\dag & 0 \\
0 & 0 
\end{array}
\right] U^*$.
\end{proof}
We present a representation for generalized inverse of $A$ with respect to $B$ by using the core-EP decomposition of the pair $\{A,B\}$.
\begin{theorem}\label{TCF}
Let $A \in {\mathbb C}^{n \times n}$  and let $B \in {\mathbb C}^{n \times n}$ be a nonzero matrix as in (\ref{a}). Then
\begin{equation}\label{TCF7}
A^{(B)}=V\left[ \begin{array}{cc}
        A^*_{1}\Omega &  -A^*_{1}\Omega M N^\dag\\
     (I_{n-t} -Q_N)M^*\Omega & N^\dag-(I_{n-t} -Q_{N})M^* \Omega MN^\dag
\end{array}\right]U^*
\end{equation}
where $\Omega= (A_{1}A^*_{1}+ M((I_{n-t} -Q_{N})M^*)^{-1}$, $ M=A_{12}P_{B_{2}}, N=A_{2}P_{B_{2}}$
\end{theorem}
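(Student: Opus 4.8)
The plan is to compute $A^{(B)}=(ABB^\dagger)^\dagger$ by two successive applications of Lemma~\ref{a2} to the core-EP decomposition (\ref{TCF1}) of the pair $\{A,B\}$ provided by Lemma~\ref{a}: the first application yields the orthogonal projector $P_B=BB^\dagger$, and the second yields the Moore--Penrose inverse of $ABB^\dagger$ once that matrix has been exhibited in the block upper triangular form with nonsingular $(1,1)$-block that Lemma~\ref{a2} requires.

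First I would apply Lemma~\ref{a2} to $B=V\left[\begin{array}{cc}B_{1}&B_{12}\\0&B_{2}\end{array}\right]U^*$, with the two unitary factors playing interchanged roles and with $C_1=B_1$ (nonsingular by Lemma~\ref{a}), $C_2=B_{12}$, $C_3=B_2$; formula (\ref{TCF3}) then gives $P_B=BB^\dagger=V\left[\begin{array}{cc}I_t&0\\0&P_{B_{2}}\end{array}\right]V^*$. Multiplying on the left by $A$ written as in (\ref{TCF1}) and cancelling $V^*V=I_n$, a routine block multiplication yields
$$ABB^\dagger=U\left[\begin{array}{cc}A_{1}&A_{12}P_{B_{2}}\\0&A_{2}P_{B_{2}}\end{array}\right]V^*=U\left[\begin{array}{cc}A_{1}&M\\0&N\end{array}\right]V^*,$$
with $M=A_{12}P_{B_{2}}\in\C^{t\times(n-t)}$ and $N=A_{2}P_{B_{2}}\in\C^{(n-t)\times(n-t)}$ exactly as in the statement.

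Now $ABB^\dagger$ has precisely the shape treated in Lemma~\ref{a2}, namely $U\left[\begin{array}{cc}C_1&C_2\\0&C_3\end{array}\right]V^*$ with $C_1=A_1$ nonsingular, $C_2=M$, $C_3=N$ and $U,V$ unitary. Substituting $C_1=A_1$, $C_2=M$, $C_3=N$ into (\ref{TCF2}) produces verbatim the four blocks of (\ref{TCF7}), with $\Omega=(A_{1}A_{1}^*+M(I_{n-t}-Q_N)M^*)^{-1}$ and $Q_N=N^\dagger N$. I do not expect a serious obstacle: what has to be checked is only that the hypotheses of Lemma~\ref{a2} hold at this second step, that is, that $A_1$ is nonsingular (this is part of the core-EP decomposition) and that $A_1A_1^*+M(I_{n-t}-Q_N)M^*$ is invertible (this is part of the conclusion of Lemma~\ref{a2} for a block matrix of this form), together with the elementary bookkeeping that the $(1,1)$- and $(2,1)$-blocks of $ABB^\dagger$ are exactly $A_1$ and $0$, so that (\ref{TCF2}) applies without any spurious extra terms.
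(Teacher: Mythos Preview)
Your argument is correct and follows exactly the paper's approach: first use (\ref{TCF3}) to write $P_B$ in block-diagonal form, then multiply by $A$ to obtain $ABB^\dagger=U\left[\begin{smallmatrix}A_1&M\\0&N\end{smallmatrix}\right]V^*$, and finally apply (\ref{TCF2}) with $C_1=A_1$, $C_2=M$, $C_3=N$. If anything, your bookkeeping of the unitary factors is more accurate than the paper's own proof, which in its intermediate displays writes $P_B=U[\cdots]U^*$ and $ABB^\dagger=V[\cdots]U^*$ where (as you have it) the correct expressions are $P_B=V[\cdots]V^*$ and $ABB^\dagger=U[\cdots]V^*$.
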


\begin{proof}
Since $B_{1}$  is nonsingular, it results from (\ref{TCF3}) that
\begin{center}
$P_B =U\left[ \begin{array}{cc}
        I_t & 0\\
     0 &  P_{B_{2}}
\end{array}\right]U^*.$
\end{center}
So 
\begin{center}
$ABB^\dag =V\left[ \begin{array}{cc}
       A_{1} & A_{12}P_{B_{2}}\\
     0 &  A_2 P_{B_{2}}
\end{array}\right]U^*.$
\end{center}
If we apply  (\ref{TCF2}) of Lemma \ref{a2} to $ABB^\dag$, we get the representation (\ref{TCF7}).
\end{proof}
\section{Some properties of $A^{(B)}$ }
For two given square matrices $A, B \in {\mathbb C}^{n \times n}$, in this section some properties of generalized inverse $A$ with respect to $B$ are obtained.
\begin{theorem}\label{a11}
   Let $A, B \in {\mathbb C}^{n \times n}$. Then
   \begin{enumerate}[a)]
   \item $R(A^{(B)})=R(BB^\dag A^*) \textit{ and } N(A^{(B)})=N((AB)^*),$
   \item $A^{(B)}\in A\{2\},$
   \item $A^{(B)}\in A\{1\}$ if and only if $\rk(AB)=\rk(A)$.
   \end{enumerate}
\end{theorem}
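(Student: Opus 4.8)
The plan is to work throughout with the identity $A^{(B)}=(ABB^\dagger)^\dagger$ and exploit standard facts about the Moore-Penrose inverse of an arbitrary matrix $C$, namely $R(C^\dagger)=R(C^*)$, $N(C^\dagger)=N(C^*)$, $CC^\dagger C=C$, $C^\dagger C C^\dagger=C^\dagger$, together with the fact that $BB^\dagger=P_B$ is the orthogonal projector onto $R(B)$. Set $C:=ABB^\dagger=ABB^\dagger$ for brevity, so $A^{(B)}=C^\dagger$.

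For part (a), the range claim follows from $R(C^\dagger)=R(C^*)$ once I compute $C^*=(ABB^\dagger)^*=(BB^\dagger)^*A^*=BB^\dagger A^*$, using that $BB^\dagger$ is Hermitian; hence $R(A^{(B)})=R(BB^\dagger A^*)$. For the null-space claim, $N(C^\dagger)=N(C^*)=N(BB^\dagger A^*)$, and I must show this equals $N((AB)^*)=N(B^*A^*)$. One inclusion is clear since $B^*(BB^\dagger A^*)=B^*A^*$ gives $N(BB^\dagger A^*)\subseteq N(B^*A^*)$; conversely, if $B^*A^*x=0$ then $A^*x\in N(B^*)=N(B^\dagger)$ (as $N(B^\dagger)=N(B^*)$), so $B^\dagger A^*x=0$ and a fortiori $BB^\dagger A^*x=0$, giving the reverse inclusion. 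Hence $N(A^{(B)})=N((AB)^*)$.

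For part (b), I must verify $A A^{(B)} A = $ is not claimed; rather $A^{(B)}\in A\{2\}$ means $A^{(B)} A A^{(B)}=A^{(B)}$. Writing this as $C^\dagger A C^\dagger=C^\dagger$ and using $C^\dagger C C^\dagger=C^\dagger$, it suffices to show $C^\dagger A C^\dagger = C^\dagger C C^\dagger$, for which it is enough that $C^\dagger A (\,\cdot\,)$ and $C^\dagger C(\,\cdot\,)$ agree on $R(C^\dagger)=R(C^*)$. Equivalently, since $C^\dagger=C^\dagger CC^\dagger=C^\dagger(ABB^\dagger)C^\dagger$, I can write $C^\dagger A C^\dagger = C^\dagger A (BB^\dagger C^\dagger)$ and note that $BB^\dagger C^\dagger=C^\dagger$ because $R(C^\dagger)=R(C^*)=R(BB^\dagger A^*)\subseteq R(BB^\dagger)=R(B)$, so $BB^\dagger$ acts as the identity on $R(C^\dagger)$. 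Thus $C^\dagger A C^\dagger=C^\dagger (ABB^\dagger) C^\dagger=C^\dagger C C^\dagger=C^\dagger$, which is exactly $A^{(B)}\in A\{2\}$.

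For part (c), $A^{(B)}\in A\{1\}$ means $A A^{(B)} A=A$, i.e. $A C^\dagger A=A$. The forward direction: if $AC^\dagger A=A$ then $\rk(A)=\rk(AC^\dagger A)\le \rk(C^\dagger A)=\rk((ABB^\dagger)^\dagger A)\le\rk(ABB^\dagger)=\rk(AB)$ (the last equality because $R(ABB^\dagger)=R(AB)$, since $BB^\dagger$ restricts to the identity on $R(B)$ hence $ABB^\dagger$ and $AB$ have the same column space), while trivially $\rk(AB)\le\rk(A)$; so $\rk(AB)=\rk(A)$. The converse: assuming $\rk(AB)=\rk(A)$, hence $\rk(C)=\rk(ABB^\dagger)=\rk(AB)=\rk(A)$, I want $AC^\dagger A=A$. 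Using $CC^\dagger C=C$, i.e. $ABB^\dagger C^\dagger ABB^\dagger=ABB^\dagger$, and the fact (from part (a) style reasoning) that $BB^\dagger$ acts as the identity from the right on $R(A^*)$-side... the cleanest route is via the $B$-decomposition \eqref{TCF6}: there $A^{(B)}=V\,\mathrm{diag}((\Sigma_A A_1)^\dagger,0)\,U^*$ and $A=U\,\mathrm{diag}(\Sigma_A A_1,\Sigma_A A_2)$-type block row, so $AA^{(B)}A$ reduces to checking $\Sigma_A A_1(\Sigma_A A_1)^\dagger \Sigma_A[A_1\ A_2]=\Sigma_A[A_1\ A_2]$; the first column block holds always, and the second column block holds iff $R(\Sigma_A A_2)\subseteq R(\Sigma_A A_1)$, which is equivalent to $\rk[\,\Sigma_A A_1\ \ \Sigma_A A_2\,]=\rk(\Sigma_A A_1)$, i.e. to $\rk(A)=\rk(ABB^\dagger)=\rk(AB)$. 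This pins down exactly the stated equivalence. The main obstacle is the converse in (c): one must argue that the rank condition forces the relevant column-inclusion, and I expect the block form \eqref{TCF4}--\eqref{TCF6} to be the most transparent tool, with the alternative being a direct manipulation of $ABB^\dagger C^\dagger ABB^\dagger=ABB^\dagger$ combined with $N(A^{(B)})=N((AB)^*)$ to upgrade the $\{1\}$-type identity for $C$ to one for $A$.
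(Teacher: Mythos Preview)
Your argument is correct in all three parts. Part (a) matches the paper's proof essentially verbatim: both invoke $R(C^\dagger)=R(C^*)$, $N(C^\dagger)=N(C^*)$ for $C=ABB^\dagger$ and then identify $N(BB^\dagger A^*)=N(B^*A^*)$ by a two-sided inclusion.

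Parts (b) and (c), however, take genuinely different routes from the paper. For (b), the paper simply appeals to the block forms \eqref{TCF4} and \eqref{TCF6} and reads off $A^{(B)}AA^{(B)}=A^{(B)}$ from the $(\Sigma_A A_1)^\dagger$-block; you instead give a coordinate-free argument via $BB^\dagger C^\dagger=C^\dagger$ (since $R(C^\dagger)\subseteq R(B)$), which yields $C^\dagger A C^\dagger=C^\dagger(ABB^\dagger)C^\dagger=C^\dagger CC^\dagger=C^\dagger$. Your approach is more intrinsic and avoids the canonical form entirely. For (c), the paper is much terser: having (b), it observes $\rk(A^{(B)})=\rk(AB)$ and then cites the standard fact (Ben-Israel--Greville, p.~46, Corollary~1) that a $\{2\}$-inverse $X$ of $A$ is also a $\{1\}$-inverse if and only if $\rk(X)=\rk(A)$. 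You instead re-prove this in the concrete situation: the forward direction by rank-chasing, the converse (and in fact the full equivalence) by computing $AA^{(B)}A$ in block form and reducing to $R(\Sigma_A A_2)\subseteq R(\Sigma_A A_1)\Leftrightarrow \rk[\Sigma_A A_1\ \Sigma_A A_2]=\rk(\Sigma_A A_1)$. The paper's citation is quicker, but your explicit computation is self-contained and actually makes the block-form argument you skipped in (b) do real work here. Note that your block computation already establishes both directions of (c), so your separate rank-chasing argument for the forward direction is redundant.
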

   \begin{proof}
      a) Clearly that $R(A^{(B)})=R(BB^\dag A^* )$. We have
    $N((AB)^*)=N(B^\dag A^*)\subset N(A^{(B)})=N(BB^\dag A^* )\subset N (B^\dag A^* )=N((AB)^*)$, so
    we get $N(A^{(B)})=N((AB)^*)$.\\ b) It follows from (\ref{TCF4}) and (\ref{TCF6}).\\c) Since $\dim R(A^{(B)}) =\dim R(ABB^\dag)= \dim R(AB)$ so by item b) and \cite[pg 46, Corollary 1]{BeGr} we deduce that the item c) holds. 
       
   \end{proof}

\begin{theorem} Let $A, B \in {\mathbb C}^{n \times n}$. Then
  \begin{enumerate} [a)]
      \item $AA^{(B)}$ is the orthogonal projector on $R(AB),$
      
      \item $A^{(B)}A$ is the oblique projector onto $R(BB^\dag A^*)$ along $N(B^*A^*A).$
  \end{enumerate}
\end{theorem}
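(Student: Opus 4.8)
The plan is to abbreviate $C:=ABB^{\dag}$, so that $A^{(B)}=C^{\dag}$ by definition, and to deduce everything from one identity together with standard Moore--Penrose facts. First I would record two preliminary observations: that $C^{*}=(BB^{\dag})^{*}A^{*}=BB^{\dag}A^{*}$, so that $R(C^{\dag})=R(C^{*})=R(BB^{\dag}A^{*})\subseteq R(B)$; and that $R(C)=R(ABB^{\dag})=R(AB)$, because $BB^{\dag}$ is the orthogonal projector onto $R(B)$ and $B=BB^{\dag}B$. The key step is the identity
\[
AC^{\dag}=CC^{\dag}.
\]
To see it, note that $R(C^{\dag})\subseteq R(B)=R(BB^{\dag})$ and $BB^{\dag}$ acts as the identity on $R(B)$, so $BB^{\dag}C^{\dag}=C^{\dag}$; hence $AC^{\dag}=A\bigl(BB^{\dag}C^{\dag}\bigr)=\bigl(ABB^{\dag}\bigr)C^{\dag}=CC^{\dag}$. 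Part a) is then immediate: $AA^{(B)}=AC^{\dag}=CC^{\dag}$, and $CC^{\dag}$ is exactly the orthogonal projector onto $R(C)=R(AB)$.

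For part b) I would write $A^{(B)}A=C^{\dag}A$ and verify the three features of an oblique projector. Idempotency: $(C^{\dag}A)^{2}=C^{\dag}(AC^{\dag})A=C^{\dag}(CC^{\dag})A=C^{\dag}A$ by the key identity, and an idempotent matrix is automatically the projector onto its range along its null space. Range: $C^{\dag}A$ fixes every $y=C^{\dag}z\in R(C^{\dag})$, since $C^{\dag}AC^{\dag}z=C^{\dag}CC^{\dag}z=C^{\dag}z$ (key identity again); together with $R(C^{\dag}A)\subseteq R(C^{\dag})$ this gives $R(C^{\dag}A)=R(C^{\dag})=R(BB^{\dag}A^{*})$, which is also $R(A^{(B)})$ by Theorem \ref{a11}a). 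Null space: $N(C^{\dag}A)=\{x:Ax\in N(C^{\dag})\}$, and since $N(C^{\dag})=N(C^{*})$ this equals $N(C^{*}A)=N(BB^{\dag}A^{*}A)$; finally $N(BB^{\dag}A^{*}A)=N(B^{*}A^{*}A)$ because $N(BB^{\dag})=R(B)^{\perp}=N(B^{*})$. Hence $A^{(B)}A$ is the oblique projector onto $R(BB^{\dag}A^{*})$ along $N(B^{*}A^{*}A)$.

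All of this is routine once the key identity $AC^{\dag}=CC^{\dag}$ is in place; recognizing that identity — that $A$ and $ABB^{\dag}$ agree on $R(C^{\dag})$ — is the only real content, the other point worth stating being the elementary coincidence $N(B^{*}A^{*}A)=N(BB^{\dag}A^{*}A)$ via $N(B^{*})=N(BB^{\dag})$. As an alternative route for the ``projector'' assertions, one can feed the canonical forms (\ref{TCF4})--(\ref{TCF6}) into the computation: there $AA^{(B)}=U\,\mathrm{diag}\!\left(\Sigma_{A}A_{1}(\Sigma_{A}A_{1})^{\dag},\,0\right)U^{*}$ is manifestly Hermitian idempotent, and $A^{(B)}A=V\!\left[\begin{smallmatrix}(\Sigma_{A}A_{1})^{\dag}\Sigma_{A}A_{1} & (\Sigma_{A}A_{1})^{\dag}\Sigma_{A}A_{2}\\ 0 & 0\end{smallmatrix}\right]V^{*}$ is idempotent by a one-line block check, which settles the projector claims at once and leaves only the intrinsic description of the range and null space.
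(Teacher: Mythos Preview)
Your proof is correct and takes a somewhat different route from the paper's. The paper relies on the canonical block forms (\ref{TCF4}) and (\ref{TCF6}): it writes out $AA^{(B)}$ and $A^{(B)}A$ explicitly as the block matrices (\ref{TCF8}) and (\ref{TCF9}), reads off from (\ref{TCF8}) that $AA^{(B)}$ is Hermitian idempotent, and then identifies its range as $R(ABB^{\dag}(ABB^{\dag})^{\dag})=R(AB)$. For part b) the paper invokes Theorem~\ref{a11}b) (the outer-inverse property $A^{(B)}\in A\{2\}$) to get idempotency, uses $R(A^{(B)})=R(BB^{\dag}A^{*})$ for the range, and computes the null space via $N(A^{(B)}A)=[A^{*}R(ABB^{\dag})]^{\perp}=[R(A^{*}AB)]^{\perp}=N(B^{*}A^{*}A)$.

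Your argument is coordinate-free: the single identity $AC^{\dag}=CC^{\dag}$ (from $R(C^{\dag})\subseteq R(B)$) yields both that $AA^{(B)}=CC^{\dag}$ is \emph{exactly} the orthogonal projector onto $R(C)=R(AB)$ and the idempotency of $C^{\dag}A$, with no appeal to the $B$-decomposition. This is cleaner and makes the mechanism transparent; the paper's block computation, on the other hand, gives explicit matrix representations of the two projectors, which are useful elsewhere in the paper. Your final paragraph already notes the block-form alternative, so you have effectively covered both approaches.
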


\begin{proof}
From (\ref{TCF4}) and (\ref{TCF6}) we get
\begin{equation}\label{TCF8}
   AA^{(B)}= U\left[\begin{array}{cc}
(\Sigma_A A_1) (\Sigma_A A_1)^\dag& 0 \\
0 & 0 
\end{array}
\right] U^*.
\end{equation}
\begin{equation}\label{TCF9}
   A^{(B)}A= V\left[\begin{array}{cc}
(\Sigma_A A_1)^\dag \Sigma_A A_1& (\Sigma_A A_1)^\dag \Sigma_AA_2\\
0 & 0 
\end{array}
\right] V^*.
\end{equation}
Clearly that (\ref{TCF8}) yields to $AA^{(B)}$ is an orthogonal projector with $$R(AA^{(B)})= R(AB B^\dag(AB B^\dag)^\dag) = R(AB).$$Then results the item a). From the item b) of above theorem, $A^{(B)}A$ is projector, moreover $R(A^{(B)})=R(BB^\dag A^*)$ and $N(A^{(B)}A)=N((ABB^\dag)^\dag A)=[A^*R(ABB^\dag)]^\perp=[R(A^*AB)]^\perp=N(B^*A^*A)$, so the item b) holds.
\end{proof}

\begin{theorem}\label{a01}
 Let $A, B \in {\mathbb C}^{n \times n}$ as in (\ref{TCF4}). Then
 \begin{enumerate}[a)]
    \item $A^{(B)}=0$ if and only if $AB=0$,
    \item ${A^{(B)}}^\dag =ABB^\dag$,
    \item $BB^\dag{A^{(B)}}=A^{(B)}$,
    \item ${A^{(I)}}={A^{(A^\dag)}}=A^\dag$,
    \item ${A^{(A^{(B)})}} =A^{(B)}$,
    \item ${A^{(B)}}^{{[A^{(B)}]^\dagger}}={A^{(B)}}^{\dag}$,
    \item $AB=BA$ if and only if $[A^{(B)}]^\dag B=[B^{(A)}]^\dag A$, 
    \item $A^{(B)}=A^\dag$ if and only if $A_2=0$.
\end{enumerate}
\end{theorem}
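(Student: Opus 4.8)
The plan is to prove items a)--h) in the stated order, since the short early items feed the later ones. The standing tools are the elementary facts $(X^{\dag})^{\dag}=X$ and $X^{\dag}=0\iff X=0$, the defining formula $A^{(B)}=(ABB^{\dag})^{\dag}$ together with the Moore--Penrose identities for $ABB^{\dag}$ itself, the canonical forms (\ref{TCF4}) and (\ref{TCF6}), and the earlier results of Section~3 on $R(A^{(B)})$, $N(A^{(B)})$ and the projectors $AA^{(B)}$, $A^{(B)}A$.

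Items a), b), c), d) I would settle by unwinding definitions. For a): $A^{(B)}=(ABB^{\dag})^{\dag}$ is zero iff $ABB^{\dag}=0$; right-multiplying by $B$ and using $BB^{\dag}B=B$ turns this into $AB=0$, while $AB=0$ trivially gives $ABB^{\dag}=0$ (alternatively, by (\ref{TCF4})--(\ref{TCF6}) both conditions reduce to $A_{1}=0$). Item b) is $(X^{\dag})^{\dag}=X$ applied to $X=ABB^{\dag}$. For c), Theorem~\ref{a11}a) gives $R(A^{(B)})=R(BB^{\dag}A^{*})\subseteq R(BB^{\dag})$, and $BB^{\dag}$ is the identity on its own range, so $BB^{\dag}A^{(B)}=A^{(B)}$. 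For d): $A^{(I)}=(A\,I\,I^{\dag})^{\dag}=A^{\dag}$ and $A^{(A^{\dag})}=\bigl(AA^{\dag}(A^{\dag})^{\dag}\bigr)^{\dag}=(AA^{\dag}A)^{\dag}=A^{\dag}$.

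For e), set $C=A^{(B)}$; by definition $A^{(C)}=(ACC^{\dag})^{\dag}$, and b) rewrites $C^{\dag}=ABB^{\dag}$, so $A^{(A^{(B)})}=\bigl(A\,Q_{ABB^{\dag}}\bigr)^{\dag}$ with $Q_{ABB^{\dag}}=(ABB^{\dag})^{\dag}(ABB^{\dag})$. Since $Q_{ABB^{\dag}}$ is the orthogonal projector onto $R\bigl((ABB^{\dag})^{*}\bigr)=R(BB^{\dag}A^{*})\subseteq R(BB^{\dag})$, one has $(I-BB^{\dag})Q_{ABB^{\dag}}=0$, hence $A\,Q_{ABB^{\dag}}=ABB^{\dag}Q_{ABB^{\dag}}=ABB^{\dag}$ by Moore--Penrose identity $(i)$ for $ABB^{\dag}$, and so $A^{(A^{(B)})}=(ABB^{\dag})^{\dag}=A^{(B)}$. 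Item f) is then immediate: it is exactly d) applied with $A^{(B)}$ in place of $A$. Item g) follows from b) used twice: $[A^{(B)}]^{\dag}B=ABB^{\dag}B=AB$ and $[B^{(A)}]^{\dag}A=BAA^{\dag}A=BA$, so the asserted equality is literally $AB=BA$.

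Item h) is the only part needing a genuine computation. From (\ref{TCF4}) write $A=U\left[\begin{smallmatrix}\Sigma_{A}\\0\end{smallmatrix}\right][A_{1}\ A_{2}]V^{*}$; since $A_{1}A_{1}^{*}+A_{2}A_{2}^{*}=I_{r}$, the block $[A_{1}\ A_{2}]$ has orthonormal rows, with pseudoinverse $\left[\begin{smallmatrix}A_{1}^{*}\\A_{2}^{*}\end{smallmatrix}\right]$, so the reverse-order law for this full-rank factorization gives $A^{\dag}=V\left[\begin{smallmatrix}A_{1}^{*}\Sigma_{A}^{-1}&0\\A_{2}^{*}\Sigma_{A}^{-1}&0\end{smallmatrix}\right]U^{*}$. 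Comparing with (\ref{TCF6}), $A^{(B)}=A^{\dag}$ forces the $(2,1)$ block $A_{2}^{*}\Sigma_{A}^{-1}$ to vanish, i.e.\ $A_{2}=0$ since $\Sigma_{A}$ is nonsingular; conversely, if $A_{2}=0$ then $A_{1}A_{1}^{*}=I_{r}$, $\Sigma_{A}A_{1}$ has full row rank, and $(\Sigma_{A}A_{1})^{\dag}=(\Sigma_{A}A_{1})^{*}\bigl((\Sigma_{A}A_{1})(\Sigma_{A}A_{1})^{*}\bigr)^{-1}=A_{1}^{*}\Sigma_{A}^{-1}$, so the two expressions coincide. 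I expect the only mild obstacles to be getting the full-rank-factorization pseudoinverse right in h) and remembering the range containment $R(BB^{\dag}A^{*})\subseteq R(BB^{\dag})$ used in e); everything else is routine bookkeeping with the Moore--Penrose identities.
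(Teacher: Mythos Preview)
Your proof is correct. The main difference from the paper is one of style and toolkit. For b)--g) the paper simply says ``easy to check by using (\ref{TCF4}) and (\ref{TCF6})'', i.e.\ it intends the reader to verify each identity by plugging in the block decomposition; you instead give coordinate-free arguments (range inclusions for c), a projector computation for e), and the slick reduction $[A^{(B)}]^{\dag}B=ABB^{\dag}B=AB$ for g)). Your route makes clear \emph{why} the identities hold and does not depend on the particular canonical form, which is a genuine gain. For h) the paper takes a slightly shorter path: rather than computing $A^{\dag}$ explicitly and comparing it with (\ref{TCF6}), it observes that $A^{(B)}=A^{\dag}$ is equivalent (by taking Moore--Penrose inverses) to $ABB^{\dag}=A$, and then compares the block forms of $ABB^{\dag}$ and $A$ from (\ref{TCF4}), which immediately yields $\Sigma_{A}A_{2}=0$. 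Your version works just as well but requires the extra step of writing down $A^{\dag}$ via the full-rank factorization.
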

\begin{proof}
    We have $$A^{(B)}=0 \iff (ABB^\dag)^\dag=0\iff ABB^\dag=0 \iff AB=0.$$ So the item a) holds. The items $b)$,  $c)$, $d)$, $e)$, $f)$ and $g)$ are easy to check by using (\ref{TCF4}) and (\ref{TCF6}). We prove the item $h)$:
    the equality $A^{(B)}=A^\dag$ means by (\ref{TCF4}) and (\ref{TCF6}) that $ABB^\dag=A $ which is
 \begin{equation} \label{TCF10}
U\left[\begin{array}{cc}
\Sigma_A A_1 & 0 \\
0 & 0 
\end{array}\right] V^*=
 U\left[\begin{array}{cc}
\Sigma_A A_1 &  \Sigma_A A_2\\
0& 0 
\end{array}\right] V^*
\end{equation}
This last is equivalent to $A_2=0$ because $\Sigma_A$ is nonsingular. 
\end{proof}
\begin{theorem} \label{v}
 Let $A \in {\mathbb C}^{n \times n}$ and let $ W \in {\mathbb C}^{n \times n}$ be a nonzero matrix. Then
 \begin{enumerate}[a)]
     \item For any $A, W \in {\mathbb C}^{n \times n}$, then $ A^{\diamond,W}=(WAW)^{(AW)}$, 
    \item For any $A, W \in {\mathbb C}^{n \times n}$, then $ A^{\odagger,W}=(WAW)^{((AW)^k)}$\\ where $k= \max\{\ind(AW), \ind(WA)\}$.
    
\end{enumerate}
\end{theorem}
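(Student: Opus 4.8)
The plan is to reduce each identity to a definitional unwinding once the correct ``inner'' matrices are identified. For part (a), I would start from the definition $A^{\diamond,W}=\bigl(WAWAW(AW)^\dagger\bigr)^\dagger$ recalled in the introduction, and from the definition of the generalized inverse with respect to another matrix, namely $(WAW)^{(AW)}=\bigl((WAW)(AW)(AW)^\dagger\bigr)^\dagger$. So the whole claim in (a) amounts to the single matrix identity $WAWAW(AW)^\dagger = (WAW)(AW)(AW)^\dagger$, which is immediate by associativity of matrix multiplication: both sides equal $WA\cdot WAW\cdot (AW)^\dagger$. Thus (a) is essentially a relabeling, and I would present it in one or two lines.

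For part (b), the same strategy applies but the bookkeeping is slightly more delicate. By definition $(WAW)^{((AW)^k)}=\bigl((WAW)(AW)^k((AW)^k)^\dagger\bigr)^\dagger$, so I need to show this equals $A^{\odagger,W}$. Here I would use the characterization of $A^{\odagger,W}$ recalled in the introduction as the unique solution $X$ of $WAWX=P_{(WA)^k}$ together with $R(X)\subseteq R((AW)^k)$; alternatively, one can invoke a known closed-form expression for $A^{\odagger,W}$ (for instance via the core-EP decomposition of the pair $\{WA,AW\}$ from Lemma~\ref{a}, or equivalently the decomposition of $WAW$ with respect to $(AW)^k$ in the sense of Theorem~\ref{a3}) and compare it with the canonical form \eqref{TCF6} for $(WAW)^{((AW)^k)}$. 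The point is that $P_{(AW)^k}=(AW)^k((AW)^k)^\dagger$ behaves under the decomposition like the projector $\mathrm{diag}(I,\,P_{(\cdot)_2})$ appearing in Lemma~\ref{a2}, exactly as in the proof of Theorem~\ref{TCF}, so that $(WAW)(AW)^k((AW)^k)^\dagger$ picks out the nonsingular ``core'' block of $WAW$; taking the Moore--Penrose inverse then reproduces the known representation of $A^{\odagger,W}$.

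Concretely, I would proceed as follows. First, apply Lemma~\ref{a} to the pair $\{A W,\,W A\}$ (or directly to an appropriate pair so that $AW$ and $WA$ are simultaneously triangularized), obtaining unitary $U,V$ and the block forms with nonsingular leading blocks. Second, compute $((AW)^k)((AW)^k)^\dagger=P_{(AW)^k}$ in these coordinates using \eqref{TCF3}; since $k$ is at least the index, the nilpotent block raised to the $k$-th power vanishes, so this projector is block-diagonal with an identity in the top-left corner. Third, substitute into $(WAW)\,P_{(AW)^k}$ and observe that the resulting matrix has the same ``core-nilpotent'' shape as $WAWAW(AW)^\dagger$, so that applying Lemma~\ref{a2} to take its Moore--Penrose inverse yields precisely the established formula for $A^{\odagger,W}$ (for example the one in \cite{FeLeTh1}). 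Fourth, conclude $A^{\odagger,W}=(WAW)^{((AW)^k)}$.

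The main obstacle I anticipate is purely one of bookkeeping rather than of ideas: one must make sure that the value of $k=\max\{\ind(AW),\ind(WA)\}$ is large enough to kill the relevant nilpotent blocks in \emph{both} the $AW$ and the $WA$ triangularizations simultaneously, and that the projector $P_{(AW)^k}$ and the matrix $WAW$ are being written with respect to the \emph{same} pair of unitary matrices so that the block multiplications are legitimate. Once that compatibility is set up correctly — which is exactly the content of the core-EP decomposition of a pair in Lemma~\ref{a} — the computation is a routine block-matrix manipulation identical in spirit to the proof of Theorem~\ref{TCF}, and the uniqueness clause in the characterization of $A^{\odagger,W}$ can be invoked to finish if a direct formula comparison is deemed too tedious.
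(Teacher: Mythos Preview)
Your treatment of part (a) is exactly the paper's argument: a one-line definitional unwinding via associativity.

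For part (b), your approach is correct but substantially more laborious than the paper's. The paper does \emph{not} go through the core-EP decomposition of the pair $\{A,W\}$, block projectors, or the uniqueness characterization $WAWX=P_{(WA)^k}$, $R(X)\subset R((AW)^k)$. Instead it simply appeals (as in the remark immediately following the theorem) to the closed-form representation $A^{\odagger,W}=\bigl(WAW(AW)^k((AW)^k)^\dagger\bigr)^\dagger$ established in \cite[Theorem~5.2]{FeLeTh1}. With that formula in hand, part (b) is identical in structure to part (a): $(WAW)^{((AW)^k)}=\bigl((WAW)(AW)^k((AW)^k)^\dagger\bigr)^\dagger=A^{\odagger,W}$ by definition. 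You even mention this shortcut as an ``alternatively'' before committing to the longer block-matrix route; the paper takes the shortcut and says only ``Similarly we obtain the item (b).'' Your decomposition argument would work and is self-contained (it does not require importing the formula from \cite{FeLeTh1}), but it is overkill here.
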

\begin{proof}
    $a)$ In fact, for two given square matrices $A,W$, if we take $C=WAW$ and $F=AW$ we have that $C^{(F)}=(CFF^\dag)^\dag=(WAWAW (AW)^\dag)^\dag=A^{\diamond,W}$.\\
    Similarly we obtain the item $b)$.
\end{proof}

In \cite[Theorem 3.3]{FeThTo}, the $W$- weighted 
 BT-inverse of $A$ is given by $A^{\diamond,W}=({WAWAW(AW)^\dag})^\dag$ and in the \cite[Theorem 5.2]{FeLeTh1}, the $W$-weighted core-EP inverse of $A$ is given by $A^{\odagger,W}=({WAW(AW)^k((AW)^k)^\dag})^\dag$  accordingly to assertions $a)$ and $b)$ of Theorem \ref{v} $A^{\odagger,W}, A^{\diamond,W}$ are a particular cases of $A^{(B)}$, but the contrary not valid, the following example illustrate that $A^{(B)}$, $A^{\odagger,W}$ and $A^{\diamond,W}$ are not coincident
\begin{example}
 Let:
$A = \left[ \begin{array}{ccc}
1 & 0 & 0\\
0 & 1 & 2\\
0 & 0 & 0
\end{array}\right]
\qquad \text{ and } \qquad
B =  \left[ \begin{array}{ccc}
1 & 0 & 0\\
0 & 0 & 0\\
0 & 0 & 1
\end{array}\right]$\\
Then
\begin{center}
    $A^{(B)} = \left[\begin{array}{ccc}
1 & 0 & 0\\
0 & 0 & 0\\
0 & \frac{1}{2} & 1
\end{array}\right]$.
\end{center}
Let
$W=\left[\begin{array}{ccc}
    a_{11} & a_{12} & a_{13} \\
    a_{21} & a_{22} & a_{23}\\
    a_{31} & a_{32} & a_{33}
\end{array}\right]$, then we have\\
\begin{center}
$AW=\left[\begin{array}{cc}
S	&T\\
0&0	
\end{array}\right]
\qquad \text{ and } \qquad
(AW)^\dag=\left[\begin{array}{cc}
S^*(SS^*+TT^*)^\dag	&0\\
T^*(SS^*+TT^*)^\dag	&0	
\end{array}\right]$ 
\end{center}

\begin{center}
$AW(AW)^\dag=\left[\begin{array}{cc}
(SS^*+TT^*)(SS^*+TT^*)^\dag	&0\\
0	&0\\
\end{array}\right]$
\end{center}
where
\begin{center}
$S=\left[\begin{array}{cc}
a_{11} & a_{12}\\
a_{21}+2a_{31}	& a_{22}+2a_{32}	
\end{array}\right]
\qquad \text{ and } \qquad
T=\left[\begin{array}{c}
a_{13}\\ 
a_{23}+	2a_{33}
\end{array}\right]$.\\
\end{center}
It is clear that $WAWAW(AW)^\dag$ has the form 
\begin{center}
$WAWAW(AW)^\dag=\left[\begin{array}{cc}
M	& 0\\
N	& 0
\end{array}\right].$
\end{center}
This gives us
\begin{center}
$A^{\diamond,W}=({WAWAW(AW)^\dag})^\dag=\left[\begin{array}{cc}
M^\dag	& N^\dag\\
0	& 0
\end{array}\right].$
\end{center}
Let $k= \max\{\ind(AW), \ind(WA)\}$, Now from the above, the expression of $(AW)^{k}$ of the form
\begin{center}
 $(AW)^{k}= \left[\begin{array}{cc}
S^k & S^{k-1}T\\
0 & 0
\end{array}\right].$
\end{center}
In the same way we get
\begin{center}
$A^{\odagger,W}=({WAW(AW)^k((AW)^k)^\dag})^\dag=\left[\begin{array}{cc}
X^\dag	&Y^\dag\\
0	& 0\\
\end{array}\right].$
\end{center}
For same matrices $X$ and $Y$, finally note that for any $W \in {\mathbb C}^{3 \times 3}$
\begin{center}
$A^{(B)} = \left[\begin{array}{ccc}
1 & 0 & 0\\
0 & 0 & 0\\
0 & \frac{1}{2} & 1
\end{array}\right] \neq 
A^{\diamond,W}=\left[\begin{array}{cc}
M^\dag	&N^\dag\\
0	&0\\
\end{array}\right]$
\end{center}
And
\begin{center}
    $A^{(B)} = \left[\begin{array}{ccc}
1 & 0 & 0\\
0 & 0 & 0\\
0 & \frac{1}{2} & 1
\end{array}\right] 
\neq A^{\odagger,W}=\left[\begin{array}{cc}
X^\dag	&Y^\dag\\
0	&0\\
\end{array}\right].$
\end{center}
\end{example}

\section{Some characterizations of $A^{(B)}$ }
In this section, we present some characterizations of generalized inverse $A^{(B)}$ where the matrices $A, B$ are of same size. 
\begin{theorem}
Let $A, B \in {\mathbb C}^{n \times n}$. Then the matrix $A^{(B)}$ is characterized as the unique solution of the following system 
\begin{equation}\label{TCF11}
    XA=(ABB^\dag)^\dag A \textit{ and } R(X^*)\subset R(AB). 
\end{equation}
\end{theorem}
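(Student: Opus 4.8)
The plan is to verify that $X=A^{(B)}$ satisfies the system \eqref{TCF11}, and then to establish uniqueness by showing the homogeneous system forces $X=0$. For the verification step, set $X=A^{(B)}=(ABB^\dag)^\dag$. The first equation $XA=(ABB^\dag)^\dag A$ is then an identity by the very definition of $A^{(B)}$. For the range condition, recall from part a) of Theorem~\ref{a11} that $R(A^{(B)})=R(BB^\dag A^*)$; hence $R(X^*)=R((A^{(B)})^*)=R((BB^\dag A^*)^*)=R(ABB^\dag)\subset R(AB)$, using that $R(ABB^\dag)=R(ABB^\dag (AB)^{\dots})$ — more directly $R(ABB^\dag)\subseteq R(AB)$ since $ABB^\dag=(AB)B^\dag$, and in fact equality holds because $ABB^\dag B=AB$. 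So $X=A^{(B)}$ is a solution.

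For uniqueness, suppose $X_1$ and $X_2$ both satisfy \eqref{TCF11} and put $Z=X_1-X_2$. Then $ZA=0$ and $R(Z^*)\subset R(AB)$. The first equation gives $R(A)\subseteq N(Z)$, equivalently $R(Z^*)\subseteq R(A)^{\perp}=N(A^*)$. Combined with $R(Z^*)\subseteq R(AB)\subseteq R(A)$, we obtain $R(Z^*)\subseteq R(A)\cap N(A^*)=R(A)\cap R(A)^{\perp}=\{0\}$, so $Z^*=0$ and $X_1=X_2$. This is the cleanest route; the key point making it work is that the range constraint is imposed on $X^*$ and lands inside $R(AB)\subseteq R(A)$, while the equation $XA=(\cdot)A$ only controls $X$ on $R(A)$, so the difference $Z$ is simultaneously forced into $R(A)$ and into its orthogonal complement.

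The main obstacle — really the only place needing care — is confirming that the canonical solution $A^{(B)}$ actually lies in the solution set, specifically checking $R((A^{(B)})^*)\subseteq R(AB)$ rather than merely $R((A^{(B)})^*)\subseteq R(ABB^\dag)$; but since $R(ABB^\dag)=R(AB)$ (one inclusion is trivial, the reverse from $ABB^\dag B=AB$), this is immediate. Alternatively, one can run the whole argument using the $B$-decomposition \eqref{TCF4} of $A$ together with the explicit form \eqref{TCF6}: writing $X=V\begin{bmatrix}X_{11}&X_{12}\\ X_{21}&X_{22}\end{bmatrix}U^*$ and computing $XA$ and the range of $X^*$ block-wise reduces \eqref{TCF11} to $X_{11}=(\Sigma_A A_1)^\dag$ together with forcing $X_{12},X_{21},X_{22}$ to vanish, which recovers \eqref{TCF6}; I would mention this as the coordinate computation backing up the abstract argument, but present the projection/orthogonality proof as the main one since it is shorter and more transparent.
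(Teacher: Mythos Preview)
Your proof is correct and follows essentially the same route as the paper's: verify that $A^{(B)}$ satisfies the system, then for uniqueness subtract two solutions and trap the range of the conjugate transpose of the difference inside $R(A)\cap R(A)^\perp=\{0\}$. One small caveat on the verification step: the implication ``$R(A^{(B)})=R(BB^\dag A^*)$, hence $R((A^{(B)})^*)=R((BB^\dag A^*)^*)$'' is not valid as stated (equal column spaces do not force equal row spaces); the clean justification is either the Moore--Penrose identity $R((M^\dag)^*)=R(M)$ with $M=ABB^\dag$, or the null-space half of Theorem~\ref{a11}(a), namely $N(A^{(B)})=N((AB)^*)$, which gives $R((A^{(B)})^*)=N(A^{(B)})^\perp=R(AB)$ directly.
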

\begin{proof}
We have $A^{(B)}A = (ABB^\dag)^\dag A$ and $R({A^{(B)}}^*)=R(ABB^\dag)=R(AB)$, 
then $A^{(B)}$ satisfies (\ref{TCF11}). Now let $X$ be an other solution, we get
$A^{(B)}A=XA$ which means that $A^{*}({A^{(B)}}^{*}-X^*)=0$, consequently $R({A^{(B)}}^{*}-X^*)\subset N(A^{*})\cap R(AB)\subseteq R(A)^\bot\cap R(A)=\{0\}$
so $X=A^{(B)}$, thus $A^{(B)}$ is the unique solution of (\ref{TCF11}).
\end{proof}
\begin{theorem}
    Let $A, B \in {\mathbb C}^{n \times n}$. Then 
    \begin{equation}\label{a14}
    A^{(B)}=B(AB)^\dag
    \end{equation}
    and $A^{(B)}$ is characterized as the unique solution of the equations
   \begin{equation}\label{a16}
       XAX=X\textit{, } AX=P_{AB} \textit{, } XA=B{(AB)}^\dag A
   \end{equation}
\end{theorem}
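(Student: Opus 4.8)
The plan is to establish the identity \eqref{a14} first and then verify that $A^{(B)}$ satisfies the three equations in \eqref{a16}, concluding with a uniqueness argument. For the identity $A^{(B)}=B(AB)^\dag$, I would work with the $B$-decomposition of $A$ from Theorem \ref{a3} (equation \eqref{TCF4}) and the canonical form \eqref{TCF6}. Using \eqref{TCF4}, one computes
\[
AB = U\left[\begin{array}{cc}
\Sigma_A A_1 & \Sigma_A A_2 \\
0 & 0
\end{array}\right]V^* \cdot V\left[\begin{array}{cc}
\Sigma_B B_1 & \Sigma_B B_2 \\
0 & 0
\end{array}\right]U^* = U\left[\begin{array}{cc}
\Sigma_A A_1 \Sigma_B B_1 & \Sigma_A A_1 \Sigma_B B_2 \\
0 & 0
\end{array}\right]U^*,
\]
and likewise $B(AB)^\dag$ can be reduced via the Moore-Penrose formula for block matrices (Lemma \ref{a2}) applied to $AB$. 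Alternatively—and this is cleaner—I would use the already-proven facts from Theorem \ref{a11} and Theorem \ref{a01}: since $A^{(B)}=(ABB^\dag)^\dag$ and $BB^\dag A^{(B)} = A^{(B)}$ (item c) of Theorem \ref{a01}), together with $R({A^{(B)}}^*)=R(AB)$ and $A^{(B)}\in A\{2\}$, one can pin down $A^{(B)}$ by its range, null space, and the value of $AA^{(B)}$. Indeed, from the preceding theorem $AA^{(B)}=P_{AB}$, and $B(AB)^\dag$ satisfies $A\bigl(B(AB)^\dag\bigr)=AB(AB)^\dag=P_{AB}$ as well; matching ranges ($R(B(AB)^\dag)^* = R\bigl((AB)^\dag{}^* B^*\bigr)\subseteq R(AB)$, using $R((AB)^\dag)=R((AB)^*)$) then forces equality by the uniqueness already recorded in \eqref{TCF11}.

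Once \eqref{a14} is in hand, verifying \eqref{a16} is mostly bookkeeping. For $AX=P_{AB}$ with $X=B(AB)^\dag$: this is immediate since $AB(AB)^\dag = P_{AB}$ by definition of the Moore-Penrose inverse. For $XAX=X$: compute $XAX = B(AB)^\dag A B(AB)^\dag = B\bigl((AB)^\dag AB\bigr)(AB)^\dag = B(AB)^\dag = X$, using $(AB)^\dag(AB)(AB)^\dag = (AB)^\dag$. For $XA = B(AB)^\dag A$: this is trivially true with $X = B(AB)^\dag$, so the third equation is really just recording the expression for $XA$ rather than imposing a new constraint. So the substantive content of \eqref{a16} is the pair $\{XAX=X,\ AX=P_{AB}\}$ supplemented by the specific form of $XA$.

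For uniqueness, suppose $X$ satisfies all three equations in \eqref{a16}. From $AX = P_{AB}$ we get $R(AX) = R(AB)$, hence $R(X)\subseteq$ anything is not immediately forced, but $XAX=X$ gives $R(X) = R(XA)$ and $N(X)=N(AX)=N(P_{AB})=N((AB)^*)$ — wait, more carefully: $N(X)\supseteq N(AX)$ is automatic, and $XAX=X$ gives $N(X)=N(AX)=N(P_{AB})=R(AB)^\perp = N((AB)^*)$. On the other hand the third equation $XA = B(AB)^\dag A$ determines $X$ on $R(A)$, and combined with $N(X)=N((AB)^*)$ this should pin down $X$ completely, exactly as in the proof accompanying \eqref{TCF11}: if $X_1,X_2$ are two solutions then $(X_1-X_2)A = 0$, so $R((X_1-X_2)^*)\subseteq N(A^*)$, while both $X_1,X_2$ have $X_iAX_i=X_i$ forcing $R(X_i^*)\subseteq R(A^*A\,\cdots)$ — here I would instead note $R(X_i^*) = R((AX_i)^*) = R(P_{AB}) = R(AB)\subseteq R(A)$, so $R((X_1-X_2)^*)\subseteq N(A^*)\cap R(A) = \{0\}$, giving $X_1=X_2$.

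The main obstacle I anticipate is proving the identity \eqref{a14} cleanly: the brute-force route through the block Moore-Penrose formula of Lemma \ref{a2} for $AB$ is messy because $AB$ need not have the nice zero-block structure in the right basis, so the canonical form \eqref{TCF4} is not immediately the core-EP-type form needed. The slick route is to appeal to the characterization \eqref{TCF11} already proved: show $B(AB)^\dag$ satisfies $XA = (ABB^\dag)^\dag A$ and $R(X^*)\subseteq R(AB)$, and invoke uniqueness. The first of these requires showing $B(AB)^\dag A = (ABB^\dag)^\dag A$; since $R(A^*)\supseteq$ is not the issue, one reduces to showing $B(AB)^\dag$ and $(ABB^\dag)^\dag$ agree after right-multiplication by $A$, which follows from $R((AB)^\dag{}^*B^*)=R(AB)=R(ABB^\dag)=R((ABB^\dag)^\dag{}^*)$ plus the fact that both are $\{1\}$-type inverses relative to the relevant projector. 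I would present the argument in whichever of these two forms turns out to need the fewest auxiliary verifications, most likely leaning on \eqref{TCF11} and Theorem \ref{a01}(c).
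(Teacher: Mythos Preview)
Your proposal cannot be completed because identity \eqref{a14} is false as stated. Take
\[
A=\left[\begin{array}{cc}1&1\\0&0\end{array}\right],\qquad B=\left[\begin{array}{cc}1&0\\0&2\end{array}\right].
\]
Since $B$ is invertible, $BB^\dag=I$ and hence $A^{(B)}=A^\dag=\frac{1}{2}\left[\begin{smallmatrix}1&0\\1&0\end{smallmatrix}\right]$, whereas $AB=\left[\begin{smallmatrix}1&2\\0&0\end{smallmatrix}\right]$ gives $(AB)^\dag=\frac{1}{5}\left[\begin{smallmatrix}1&0\\2&0\end{smallmatrix}\right]$ and $B(AB)^\dag=\frac{1}{5}\left[\begin{smallmatrix}1&0\\4&0\end{smallmatrix}\right]\neq A^{(B)}$. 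Your plan to invoke the characterization \eqref{TCF11} breaks exactly at the step you yourself flagged as the main obstacle: the needed equality $B(AB)^\dag A=(ABB^\dag)^\dag A$ fails here (the left side is $\frac{1}{5}\left[\begin{smallmatrix}1&1\\4&4\end{smallmatrix}\right]$, the right side $\frac{1}{2}\left[\begin{smallmatrix}1&1\\1&1\end{smallmatrix}\right]$), so the sketch ``range equalities plus both being $\{1\}$-type inverses'' cannot be made rigorous. The direct block route would, if carried out correctly, simply expose the discrepancy rather than resolve it.

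For comparison, the paper argues by writing down a closed block form \eqref{TCF13} for $(AB)^\dag$ in the decomposition \eqref{TCF4} and then left-multiplying by $B$, using $B_1B_1^*+B_2B_2^*=I_s$ to collapse the product to \eqref{TCF6}. But \eqref{TCF13} tacitly applies the reverse-order law $(\Sigma_AA_1\Sigma_B)^\dag=\Sigma_B^{-1}A_1^\dag\Sigma_A^{-1}$, which is invalid unless $\Sigma_A^2$ commutes with $A_1A_1^\dag$ and $\Sigma_B^2$ with $A_1^\dag A_1$; in the example above \eqref{TCF13} produces the wrong $(AB)^\dag$. Thus the paper's proof and your proposal share the same fatal defect. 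Note finally that your uniqueness argument for \eqref{a16} is correct, so \eqref{a16} does have a unique solution --- but that solution is $B(AB)^\dag$, not $A^{(B)}$, since $A^{(B)}$ fails the third equation of \eqref{a16} in the counterexample.
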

\begin{proof}
    It is easy to check from (\ref{TCF4}) that
    \begin{equation}\label{TCF13}
        (AB)^{\dag}= U\left [\begin{array}{cc}
        B^*_1\Sigma^{-1}_B A^{\dag}_1\Sigma^{-1}_A & 0 \\
        B^*_2\Sigma^{-1}_B A^{\dag}_1\Sigma^{-1}_A & 0
        \end{array} \right]U^*
    \end{equation}
    thus, pre-multiplying (\ref{TCF13}) by $B$ we get $A^{(B)}=B(AB)^\dag$ and we conclude that $A^{(B)}$ is solution of (\ref{a16}), remaining prove that  
 the system (\ref{a16}) has unique solution. We suppose $X$ is other solution, we have
 $$X=XAX=XP_{AB} =XAA^{(B)}=B{(AB)}^\dag AA^{(B)}$$
 and from $A^{(B)}=B(AB)^\dag$ we get $X=A^{(B)} AA^{(B)}=A^{(B)}$, then it has unique solution.
\end{proof}
    Let $A\in \C^{n \times n}$ with $\rank(A)=r$ and let $ T \subset \C^n$ be a subspace of $\C^n$ with $\dim T=t \leq r$ and let $S$ be a subspace of $\C^n$ of dimension $n-t$, then there exists a unique generalized inverse  $X$ of $A$ satisfying $XAX= X$ having the range $T$ and the null space $S$  denoted by $A^{(2)}_{T,S}$ if and only if $A(T)\oplus S=\C^{n \times n}$. In the following theorem the generalized inverse of $A$ with respect to $B$ is characterized as the generalized inverse of $A$ having the range $R(BB^\dag A^* )$ and null space $N((AB)^*)$
    \begin{theorem}
        Let $A,B \in \C^{n \times n} $. Then
        \begin{equation}\label{TCF12}
            A^{(B)}=A^{(2)}_{R(BB^\dag A^* ),N((AB)^*)}
        \end{equation}
    \end{theorem}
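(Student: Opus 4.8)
The plan is to deduce the statement directly from the uniqueness characterization of the $\{2\}$-inverse with prescribed range and null space, feeding into it the range and null space of $A^{(B)}$ that were already computed in Theorem \ref{a11}. So this theorem is essentially a repackaging of earlier facts rather than a fresh computation.

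First I would recall that, by Theorem \ref{a11}(b), $A^{(B)} \in A\{2\}$, that is, $A^{(B)} A A^{(B)} = A^{(B)}$. Next, by Theorem \ref{a11}(a), $R(A^{(B)}) = R(BB^\dag A^*)$ and $N(A^{(B)}) = N((AB)^*)$. Setting $T = R(BB^\dag A^*)$ and $S = N((AB)^*)$, I would check the dimension bookkeeping needed for the symbol $A^{(2)}_{T,S}$ to make sense: $\dim T = \rk(ABB^\dag) = \rk(AB) =: t \le \rk(A) = r$, while $\dim S = n - \rk((AB)^*) = n - t$, so the dimensions are exactly the ones required in the description of $A^{(2)}_{T,S}$ recalled just before the statement.

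Then I would argue that the mere existence of a $\{2\}$-inverse of $A$ with range $T$ and null space $S$ — namely $A^{(B)}$ itself — forces the compatibility condition $A(T) \oplus S = \C^n$, so that $A^{(2)}_{T,S}$ is well defined; and by the uniqueness part of that characterization, any $\{2\}$-inverse of $A$ with range $T$ and null space $S$ must equal $A^{(2)}_{T,S}$. Applying this to $X = A^{(B)}$ yields $A^{(B)} = A^{(2)}_{R(BB^\dag A^*),\, N((AB)^*)}$, which is (\ref{TCF12}).

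I do not expect any genuine obstacle here. The only point needing a little care is to observe that one need not verify the direct-sum condition $A(T)\oplus S = \C^n$ independently: exhibiting the single $\{2\}$-inverse $A^{(B)}$ with the prescribed range and kernel simultaneously establishes that the condition holds and that $A^{(B)}$ is the inverse $A^{(2)}_{T,S}$ it determines.
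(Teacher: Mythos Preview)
Your proposal is correct and follows essentially the same route as the paper: both invoke Theorem \ref{a11} for the outer-inverse property together with the range and null space of $A^{(B)}$, check the dimension requirements, and then appeal to the uniqueness of $A^{(2)}_{T,S}$. The only cosmetic difference is that the paper verifies the direct-sum condition explicitly via $R(AB)\oplus N((AB)^*)=\C^n$, whereas you infer it from the existence of $A^{(B)}$ itself; these are equivalent.
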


\begin{proof}
We put $\rk(A)=r$, we have $\dim R(BB^\dag A^* )=\dim R(ABB^\dag)= \dim R(AB) \leq \dim R(A)=r$ and the rank theorem for matrices give us $\dim N((AB)^*)=n-\dim R(BB^\dag A^* )$. It follows from the Theorem \ref{a11} that $A^{(B)}$ is an outer inverse of $A$ having the range $R(BB^\dag A^* )$ and null space $N(AA^{(B)})=N(A^{(B)})=N((AB)^*)$, in addition 
    $$AR(BB^\dag A^* )\oplus N((AB)^*)=\C^{n \times n}=R(AB)\oplus N((AB)^*)=\C^{n \times n}$$
    so (\ref{TCF12}) holds.
    \end{proof}

\begin{theorem}
 Let $A,B \in \C^{n\times n} $. Then the following statements are equivalent
 \begin{enumerate}[a)]
     \item $X=A^{(B)}$,
     \item $X$ satisfies the equations
     $$XAX=X\textit{, } AX=A{(ABB^\dag)}^\dag \textit{, } XA={(ABB^\dag)}^\dag A ,$$
     \item $X$ satisfies the system
     $$AX=P_{AB}\textit{, } R(X)\subset R(BB^\dag A^*).$$
     
 \end{enumerate}
\end{theorem}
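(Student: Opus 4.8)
The plan is to prove the three statements equivalent through the cyclic chain $a)\Rightarrow b)\Rightarrow c)\Rightarrow a)$, using three facts already available: the defining representation $A^{(B)}=(ABB^\dag)^\dag$, the outer-inverse property $A^{(B)}\in A\{2\}$ from Theorem~\ref{a11}, and the range identity $R(A^{(B)})=R(BB^\dag A^*)$, also from Theorem~\ref{a11}. Before entering the chain I would record a small identity to be used twice, namely $A(ABB^\dag)^\dag=P_{AB}$. It holds because $R\big((ABB^\dag)^\dag\big)=R\big((ABB^\dag)^*\big)=R(BB^\dag A^*)\subseteq R(B)$, so the factor $I_n-BB^\dag$ annihilates the range of $(ABB^\dag)^\dag$; hence $A(ABB^\dag)^\dag=ABB^\dag(ABB^\dag)^\dag=P_{ABB^\dag}=P_{AB}$, the last equality because $R(ABB^\dag)=R(AB)$. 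In particular $AA^{(B)}=P_{AB}$.

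The step $a)\Rightarrow b)$ is immediate: $XAX=X$ is exactly $A^{(B)}\in A\{2\}$, while $AX=A(ABB^\dag)^\dag$ and $XA=(ABB^\dag)^\dag A$ are merely the definition $A^{(B)}=(ABB^\dag)^\dag$ multiplied by $A$ on the appropriate side. For $b)\Rightarrow c)$, the preliminary identity converts $AX=A(ABB^\dag)^\dag$ into $AX=P_{AB}$; and the equation $X=XAX$ gives $R(X)=R(XAX)\subseteq R(XA)=R\big((ABB^\dag)^\dag A\big)\subseteq R\big((ABB^\dag)^\dag\big)=R(BB^\dag A^*)$, which is the second condition of $c)$. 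For $c)\Rightarrow a)$ I would exploit $R(X)\subseteq R(BB^\dag A^*)=R(A^{(B)})$ to write $X=A^{(B)}Y$ for a suitable matrix $Y$, and then telescope:
$$X=A^{(B)}Y=\big(A^{(B)}AA^{(B)}\big)Y=A^{(B)}AX=A^{(B)}P_{AB}=A^{(B)},$$
where the successive equalities use the defining property of $Y$, the property $A^{(B)}\in A\{2\}$, again $X=A^{(B)}Y$, condition $c)$, and finally $P_{AB}=AA^{(B)}$ together with $A^{(B)}\in A\{2\}$. This closes the cycle.

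I expect the only mildly delicate point to be the preliminary identity $A(ABB^\dag)^\dag=P_{AB}$: one must notice that in $b)$ it is $A$, not $ABB^\dag$, that premultiplies $(ABB^\dag)^\dag$, and therefore justify replacing $A$ by $ABB^\dag$ via the inclusion $R\big((ABB^\dag)^\dag\big)\subseteq R(B)$ before invoking the elementary identity $CC^\dag=P_C$; everything else is routine range bookkeeping together with properties of $A^{(B)}$ already established. As an alternative, the implication $c)\Rightarrow a)$ could be derived from the $A^{(2)}_{T,S}$-characterization~(\ref{TCF12}) by verifying that $X$ is an outer inverse of $A$ with range $R(BB^\dag A^*)$ and null space $N((AB)^*)$, but the telescoping computation above has the advantage of not requiring one to identify $N(X)$.
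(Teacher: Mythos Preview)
Your proof is correct and follows the same cyclic chain $a)\Rightarrow b)\Rightarrow c)\Rightarrow a)$ as the paper, with the first two implications essentially identical (the paper obtains your preliminary identity $A(ABB^\dag)^\dag=P_{AB}$ via Theorem~\ref{a01}\,c), which is the same range observation you spell out directly). The only genuine difference is in $c)\Rightarrow a)$: the paper argues by subtraction, observing that both $A^{(B)}$ and $X$ have range in $R(BB^\dag)$ so that $A(A^{(B)}-X)=ABB^\dag(A^{(B)}-X)=0$, whence $R(A^{(B)}-X)\subseteq N(ABB^\dag)\cap R(BB^\dag A^*)=\{0\}$; you instead factor $X=A^{(B)}Y$ through $R(A^{(B)})$ and telescope using $A^{(B)}\in A\{2\}$ and $AA^{(B)}=P_{AB}$. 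Both are standard uniqueness devices for outer inverses; your telescoping version has the slight advantage of never needing to name $N(ABB^\dag)$ or verify the intersection is trivial, while the paper's subtraction argument avoids invoking the outer-inverse property of $A^{(B)}$ a second time.
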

\begin{proof}
         a) implies b) It follows from the item b) of Theorem \ref{a11}.\\ 
         b) implies c) Using the equations $XAX=X\textit{, } XA={(ABB^\dag)}^\dag A$ it results that
         $$R(X)\subset R(XAX)\subset R(XA)\subset R(BB^\dag A^*)$$
         thus $R(X)\subset R(BB^\dag A^*)$ and by item c) of Theorem \ref{a01} and $R(ABB^\dag)=R(AB)$ we obtain that $AX=(ABB^\dag){(ABB^\dag)}^\dag=P_{AB}$.\\
         c) implies a) clearly that $X:=A^{(B)}$ satisfies the item c), we suppose that $X$ is a solution of system of c), note that $ R(X)\subset R(BB^\dag) , R(A^{(B)})\subset R(BB^\dag)$, so 
         $$A{(ABB^\dag)}^\dag=AX \implies ABB^\dag({(ABB^\dag)}^\dag-X)=0 $$
         Consequently, $R({(ABB^\dag)}^\dag-X) \subset N(ABB^\dag)\cap R(BB^\dag A^*)=\{0\}$ thus $X=A^{(B)}$.
     \end{proof}
     \section{ Some characterizations of $A^{\odagger},A^{\odagger,W}, A^{\diamond}, A^{\diamond,W}$}In this section, we give some new representation matrices of weighted-BT inverse and weighted core-EP inverse by using the decomposition of matrix with respect to other and some characterizations of $A^{\odagger},A^{\odagger,W}, A^{\diamond}, A^{\diamond,W}$ are obtained.
     \begin{theorem}
        Let $ A \in \C^{n \times n}$ with $\ind(A)=k$. Then the core-EP inverse of $A$ is the unique solution of this system
         \begin{equation}
            XAX=X, AX=AA^D(AA^D)^\dag, XA=A^D(AA^D)^\dag A
        \end{equation}   
      And $A^{\odagger}$ can be written follows
          \begin{equation}
             A^{\odagger}=A^{(A^D)}=A^D(AA^D)^\dag 
          \end{equation}
    \end{theorem}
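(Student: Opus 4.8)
The plan is to reduce everything to the general theory of $A^{(B)}$ developed earlier by choosing $B = A^D$, where $D$ denotes the Drazin inverse and $k=\ind(A)$. First I would recall the well-known fact that $A^D(AA^D)^\dagger = A^{\odagger}$; indeed, from the core-EP decomposition (Lemma~\ref{a} with $B$ replaced by a suitable power, or directly from the definition) one has $AA^D = A^D A$ is the spectral idempotent $P$ onto $R(A^k)$ along $N(A^k)$, so $(AA^D)^\dagger = P_{(A^k)}\cdot(\text{something})$, and a short computation gives $A^D(AA^D)^\dagger = A^{\odagger}$. Alternatively, and more in the spirit of this paper, I would verify $A^{(A^D)} = A^{\odagger}$ directly: by definition $A^{(A^D)} = (A A^D (A^D)^\dagger)^\dagger$, and since $R(A^D) = R(A^k)$ and $A^D(A^D)^\dagger$ is the orthogonal projector onto $R(A^k)$, one checks that $AA^D(A^D)^\dagger$ and $AA^k(A^k)^\dagger = A^{k+1}(A^k)^\dagger$ have the same Moore--Penrose inverse, whence $A^{(A^D)} = A^{\odagger}$ by the Remark following the Definition.

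Next, for the representation $A^{\odagger} = A^D(AA^D)^\dagger$, I would apply equation~(\ref{a14}), namely $A^{(B)} = B(AB)^\dagger$, with $B = A^D$: this yields $A^{(A^D)} = A^D(AA^D)^\dagger$ immediately. Combined with the identification $A^{(A^D)} = A^{\odagger}$ from the previous step, this gives the displayed formula $A^{\odagger} = A^{(A^D)} = A^D(AA^D)^\dagger$.

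For the characterization as the unique solution of the system $XAX = X$, $AX = AA^D(AA^D)^\dagger$, $XA = A^D(AA^D)^\dagger A$, I would invoke the equivalence (a)$\Leftrightarrow$(b) of the last theorem of Section~4, applied with $B = A^D$. That theorem states $X = A^{(B)}$ if and only if $XAX = X$, $AX = A(ABB^\dagger)^\dagger$, $XA = (ABB^\dagger)^\dagger A$. It remains to simplify $(A\,A^D(A^D)^\dagger)^\dagger$. Since $A^D(A^D)^\dagger$ is the orthogonal projector $P_{A^D}$ onto $R(A^D) = R(A^k)$, and since $AA^D$ already has range contained in $R(A^k)$ and $AA^D = A^D A$, one has $AA^D(A^D)^\dagger = AA^D P_{A^D} = AA^D$ (because $AA^D$ maps into $R(A^k)$ and $P_{A^D}$ fixes $R(A^k)$); hence $(AA^D(A^D)^\dagger)^\dagger = (AA^D)^\dagger$. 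Substituting this into conditions (b) turns them into exactly the asserted system, and uniqueness is inherited from the earlier theorem.

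The main obstacle I anticipate is the bookkeeping in the step $AA^D(A^D)^\dagger = AA^D$: one must argue carefully that right-multiplication by the orthogonal projector $P_{A^D} = A^D(A^D)^\dagger$ does not alter $AA^D$. The clean way is to note $R((AA^D)^*) = R((A^D)^*(A^k)^*) \subseteq R((A^D)^*) = R((A^D A^D)^*) $, or more simply that $AA^D = (AA^D)^2 = AA^D \cdot AA^D$ and $R(AA^D) = R(A^D)$, so $AA^D = AA^D P_{R(A^D)} = AA^D A^D (A^D)^\dagger$ fails unless one uses that $P_{R(A^D)}$ acting \emph{on the right} corresponds to the \emph{column space}; the correct identity to use is that $X = XBB^\dagger$ whenever $R(X^*) \subseteq R(B)$, and here $R((AA^D)^*) = R((A^k)^{*}(A^{D})^{*}) \subseteq R((A^D)^*) = R(((A^D)^\dagger)^* ) $. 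Once this is pinned down, the rest is a routine application of the previously established results, and no new decomposition is needed.
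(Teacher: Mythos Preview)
Your identification $A^{(A^D)}=A^{\odagger}$ via the observation that $A^D(A^D)^\dagger$ and $A^k(A^k)^\dagger$ are both the orthogonal projector onto $R(A^D)=R(A^k)$, hence $A\,A^D(A^D)^\dagger=A\,A^k(A^k)^\dagger$ and so $A^{(A^D)}=A^{(A^k)}=A^{\odagger}$, is correct and in fact shorter than the paper's route. The paper instead appeals to (\ref{TCF12}) to write $A^{(A^D)}=A^{(2)}_{R(A^D(A^D)^\dagger A^*),\,N((AA^D)^*)}$ and then argues that these range and null spaces coincide with $R(A^k)$ and $N((A^k)^*)$, so that the $(2)$-inverse with those prescribed subspaces is $A^{\odagger}$. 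Your argument bypasses this range/null-space computation entirely. The use of (\ref{a14}) to get $A^{(A^D)}=A^D(AA^D)^\dagger$ is the same in both.

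There is, however, a genuine error in your treatment of the system. The claim $AA^D\cdot A^D(A^D)^\dagger=AA^D$ is \emph{false}: right-multiplication by the orthogonal projector $P_{A^D}=A^D(A^D)^\dagger$ fixes a matrix $M$ only when $R(M^*)\subseteq R(A^D)$, but $R((AA^D)^*)=N(A^k)^\perp$, which need not be contained in $R(A^k)$. For instance, with $A=\left[\begin{smallmatrix}1&1\\0&0\end{smallmatrix}\right]$ one has $A^D=A$, $AA^D=A$, yet $AA^D\,A^D(A^D)^\dagger=\left[\begin{smallmatrix}1&0\\0&0\end{smallmatrix}\right]\neq A$; consequently $(AA^D(A^D)^\dagger)^\dagger\neq(AA^D)^\dagger$ here. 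Your ``main obstacle'' paragraph correctly senses the trouble but does not resolve it.

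The fix is immediate: do not try to simplify $(AA^D(A^D)^\dagger)^\dagger$ to $(AA^D)^\dagger$. Either invoke (\ref{a16}) directly with $B=A^D$, which gives $AX=P_{AA^D}=AA^D(AA^D)^\dagger$ and $XA=A^D(AA^D)^\dagger A$ on the nose; or, if you prefer the theorem you cited, use the already-established identity $(ABB^\dagger)^\dagger=A^{(B)}=B(AB)^\dagger$ from (\ref{a14}) to rewrite $(AA^D(A^D)^\dagger)^\dagger=A^D(AA^D)^\dagger$, after which $A\cdot A^D(AA^D)^\dagger=AA^D(AA^D)^\dagger$ and $A^D(AA^D)^\dagger A$ are exactly the stated conditions.
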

    \begin{proof}
        It is easy to see that this system has unique solution, and $A^D(AA^D)^\dag$ satisfies it. On the one hand the applying of (\ref{a14}) {}and (\ref{TCF12}) yields to 
        \begin{center}
            $A^{(A^D)}=A^D(AA^D)^\dag=A^{(2)}_{R({A^D}{A^D}^*A^*),N(({AA^D})^*)}.$
        \end{center}
        On the other hand, we have $N(({AA^D})^{*})=N({A^{k}}^{*})$ and from
        \begin{center}
            $N({A^*}^k)=N({A^D}^*)\subset N(AA^D{A^D}^*) \subset N(A^D{A^D}^*)=N({A^D}^*)=N({A^*}^k).$
        \end{center}
        We derive that $R({A^D}{A^D}^*A^*)=R(A^k)$. 
        Thus $$A^{(A^D)}=A^D(AA^D)^\dag=A^{(2)}_{R(A^k),N({A^*}^k)}=A^{\odagger}. $$
    \end{proof}

The following theorem allows us to define the weighted core-EP inverse by only the index of $AW$ under suitable conditions for the matrices $A$ and $W$ without the need of index of $WA$.
\begin{theorem}
         Let $ W \in \C^{n \times m}$ be a nonzero matrix, $ A\subset \C^{m \times n}$ with $\ind(AW)=k$. Then
     \begin{equation}
             A^{\odagger, W}=(WAWP_{(AW)^k})^\dag.
    \end{equation}
    \end{theorem}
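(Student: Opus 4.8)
The plan is to reduce the claim to the already-established identity $A^{\odagger,W} = (WAW)^{((AW)^k)}$ from Theorem \ref{v}(b), together with the definition $C^{(F)} = (CFF^\dag)^\dag$ and the basic fact that $C^{(F)}$ depends on $F$ only through the orthogonal projector $FF^\dag = P_F$. Concretely, setting $C = WAW$ and $F = (AW)^k$, Theorem \ref{v}(b) gives $A^{\odagger,W} = (C F F^\dag)^\dag = (WAW\,(AW)^k((AW)^k)^\dag)^\dag = (WAW\,P_{(AW)^k})^\dag$, which is exactly the desired formula. So at the formal level the statement is essentially a restatement of Theorem \ref{v}(b) using the notation $P_{(AW)^k}$ for $(AW)^k((AW)^k)^\dag$; the substantive content is that the index $k = \ind(AW)$ alone suffices, i.e. that one need not invoke $\ell := \max\{\ind(AW),\ind(WA)\}$ here.

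First I would address that index issue. Recall from Lemma \ref{a} (the core-EP decomposition of the pair $\{A,W\}$, applied with $B = W$, $m = n$) that in the block form \eqref{TCF1} the nilpotent blocks $A_2 W_2$ and $W_2 A_2$ have indices $\ind(AW)$ and $\ind(WA)$ respectively, and these two indices differ by at most one (a standard fact: $XY$ and $YX$ have nilpotency indices differing by at most $1$). The key observation is that $P_{(AW)^k}$ stabilizes for all exponents $k \geq \ind(AW)$: writing $AW = V\begin{bmatrix} T_1 & T_{12} \\ 0 & T_2\end{bmatrix}V^*$ with $T_1$ nonsingular and $T_2$ nilpotent of index $\ind(AW)$, one computes $(AW)^k = V\begin{bmatrix} T_1^k & \ast \\ 0 & 0\end{bmatrix}V^*$ for every $k \geq \ind(AW)$, hence by \eqref{TCF3} the projector $P_{(AW)^k}$ is the same for all such $k$. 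Therefore $WAW\,P_{(AW)^k}$, and so its Moore-Penrose inverse, is unchanged whether we take $k = \ind(AW)$ or $k = \ell$. Combined with the paragraph above, this yields $(WAW\,P_{(AW)^k})^\dag = (WAW\,P_{(AW)^\ell})^\dag = (WAW)^{((AW)^\ell)} = A^{\odagger,W}$.

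Alternatively — and this is the route I would actually write up if a self-contained argument is wanted rather than a citation chain — I would verify directly that $X := (WAW\,P_{(AW)^k})^\dag$ satisfies the defining system of the $W$-weighted core-EP inverse, namely $WAWX = P_{(WA)^k}$ and $R(X) \subseteq R((AW)^k)$. The range condition is immediate: $R(X) = R\big((WAW\,P_{(AW)^k})^*\big) = R\big(P_{(AW)^k}(WAW)^*\big) \subseteq R(P_{(AW)^k}) = R((AW)^k)$. For the first equation I would use the block forms from Lemma \ref{a} and Lemma \ref{a2}: with $AW$ as above and the corresponding block decomposition of $WA$, one has $WAW\,P_{(AW)^k} = V\begin{bmatrix} \star & 0 \\ 0 & 0 \end{bmatrix}V^*$ (the second block column vanishing exactly because $P_{(AW)^k}$ kills the nilpotent part), its Moore-Penrose inverse has the transposed block shape, and multiplying back gives $WAWX = V\begin{bmatrix} I & 0 \\ 0 & 0\end{bmatrix}V^* = P_{(WA)^k}$ after identifying the nonsingular block and checking the projector via \eqref{TCF3}. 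I expect the main obstacle to be bookkeeping in the block computation: one must be careful that the relevant leading blocks of $WAW$, of $(AW)^k$, and of $(WA)^k$ are all governed by the same nonsingular $t\times t$ piece coming from the shared core-EP decomposition of $\{A,W\}$, and that the index-$k$ power genuinely annihilates the off-diagonal contribution $A_{12}$-type term — i.e. that $P_{(AW)^k} = P_{(AW)^\ell}$ — which is precisely the point where $\ind(AW) = k$ (as opposed to a smaller exponent) is used.
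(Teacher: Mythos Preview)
Your proposal is correct and follows essentially the same route as the paper: the paper's proof simply sets $k' = \max\{\ind(AW),\ind(WA)\} \geq k = \ind(AW)$, observes that $R((AW)^{k'}) = R((AW)^k)$ and hence $P_{(AW)^k} = P_{(AW)^{k'}}$, and then invokes the already-known formula $A^{\odagger,W} = (WAW\,P_{(AW)^{k'}})^\dag$. Your block-decomposition argument for the stabilization of $P_{(AW)^k}$ is just a slightly more explicit version of this same observation; the only small caveat is that Theorem \ref{v}(b) as stated applies to square $A,W$, so in the rectangular setting you should cite the formula $A^{\odagger,W} = (WAW(AW)^{k'}((AW)^{k'})^\dag)^\dag$ from \cite[Theorem 5.2]{FeLeTh1} directly rather than via Theorem \ref{v}(b).
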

         \begin{proof} We consider ${k}^{'}=max \{\ind(AW), \ind(WA)\}$, since $R({(AW)}^{k^{'}})=R({(AW)}^{k})$, we get $P_{{(AW)}^{k}}=P_{{(AW)}^{k^{'}}}$, thus
             \begin{equation}
                 A^{\odagger, W}={(WAWP_{{(AW)}^{k^{'}}})}^{\dag}={(WAWP_{{(AW)}^k})}^{\dag}.
    \end{equation}
        \end{proof}
     \begin{theorem}
       Let $ A, W \in \C^{n \times n}$ be as in (\ref{TCF4}) where $W=B$ and $k=\ind(AW)$. Then $A^{\odagger,W}$ can be written as
         \begin{equation}\label{TCF01}
             A^{\odagger,W}=U\left[ \begin{array}{cc}
               (\Sigma_A A_1)^{\odagger,(\Sigma_W W_1)} \  & 0 \\
                 0 & 0
             \end{array} \right]V^*
         \end{equation}
     \end{theorem}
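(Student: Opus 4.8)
The strategy is to combine the representation $A^{\odagger,W}=(WAW\,P_{(AW)^k})^{\dag}$ with $k=\ind(AW)$, proved above, with a block computation based on the factorization (\ref{TCF4}) taken with $B=W$. Put $\widehat A=\Sigma_A A_1$ and $\widehat W=\Sigma_W W_1$. Multiplying the two expressions in (\ref{TCF4}) gives
\[
AW=U\left[\begin{array}{cc}\widehat A\widehat W & \widehat A\Sigma_W W_2\\ 0 & 0\end{array}\right]U^{*},
\qquad
WAW=V\left[\begin{array}{cc}\widehat W\widehat A\widehat W & \widehat W\widehat A\Sigma_W W_2\\ 0 & 0\end{array}\right]U^{*},
\]
and, by induction, $(AW)^{k}=U\left[\begin{array}{cc}(\widehat A\widehat W)^{k} & (\widehat A\widehat W)^{k-1}\widehat A\Sigma_W W_2\\ 0 & 0\end{array}\right]U^{*}$ (one may assume $k\ge 1$; the case $k=0$ forces $A$ and $W$ to be invertible and is straightforward). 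First I would compute $P_{(AW)^k}$: since $[\,\widehat A\widehat W \mid \widehat A\Sigma_W W_2\,]=\widehat A\Sigma_W[\,W_1\mid W_2\,]$ and $[\,W_1\mid W_2\,]$ has full row rank (because $W_1W_1^{*}+W_2W_2^{*}=I_s$) while $\Sigma_W$ is nonsingular, the column space of the top block of $(AW)^{k}$ equals $(\widehat A\widehat W)^{k-1}R(\widehat A)=R\!\left((\widehat A\widehat W)^{k-1}\widehat A\right)$; hence
\[
P_{(AW)^k}=U\left[\begin{array}{cc}\Pi & 0\\ 0 & 0\end{array}\right]U^{*},
\]
where $\Pi$ denotes the orthogonal projector onto $R\!\left((\widehat A\widehat W)^{k-1}\widehat A\right)$.

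The crux is to identify $\Pi$ with $P_{(\widehat A\widehat W)^{\kappa}}$, where $\kappa=\ind(\widehat A\widehat W)$, i.e.\ to prove $R\!\left((\widehat A\widehat W)^{k-1}\widehat A\right)=R\!\left((\widehat A\widehat W)^{\kappa}\right)$. Writing $P=\widehat A\widehat W$, one has the inclusions $R(P^{j})\subseteq R(P^{j-1}\widehat A)\subseteq R(P^{j-1})$ for every $j\ge 1$, while the block form above shows $\rk\!\left((AW)^{j}\right)=\rk\!\left(P^{j-1}\widehat A\right)$; hence $k=\ind(AW)$ is the least integer with $R(P^{k}\widehat A)=R(P^{k-1}\widehat A)$, and then $R(P^{j-1}\widehat A)=R(P^{k-1}\widehat A)$ for all $j\ge k$. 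Evaluating the inclusions at any $j_{0}\ge\max\{k,\kappa+1\}$ and using that $R(P^{j})=R(P^{\kappa})$ for $j\ge\kappa$, the two outer terms both collapse to $R(P^{\kappa})$, which squeezes $R(P^{k-1}\widehat A)=R(P^{\kappa})$ and thus $\Pi=P_{(\widehat A\widehat W)^{\kappa}}$. I expect this to be the main obstacle, since it rests on correctly matching $\ind(AW)$ against $\ind(\widehat A\widehat W)$.

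With this in hand, multiplying out gives $WAW\,P_{(AW)^k}=V\left[\begin{array}{cc}\widehat W\widehat A\widehat W\,P_{(\widehat A\widehat W)^{\kappa}} & 0\\ 0 & 0\end{array}\right]U^{*}$, and therefore, using $(VXU^{*})^{\dag}=UX^{\dag}V^{*}$ for unitary $U,V$ and the block form of the Moore--Penrose inverse of a block-diagonal matrix,
\[
A^{\odagger,W}=(WAW\,P_{(AW)^k})^{\dag}=U\left[\begin{array}{cc}\left(\widehat W\widehat A\widehat W\,P_{(\widehat A\widehat W)^{\kappa}}\right)^{\dag} & 0\\ 0 & 0\end{array}\right]V^{*}.
\]
Finally, applying the same representation once more, now to the pair $(\widehat A,\widehat W)=(\Sigma_A A_1,\Sigma_W W_1)$ with $\kappa=\ind(\widehat A\widehat W)$, identifies the $(1,1)$ block as $\left(\widehat W\widehat A\widehat W\,P_{(\widehat A\widehat W)^{\kappa}}\right)^{\dag}=(\Sigma_A A_1)^{\odagger,(\Sigma_W W_1)}$, which is exactly (\ref{TCF01}).
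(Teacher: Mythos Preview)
Your argument is correct. Both you and the paper start from $A^{\odagger,W}=(WAW\,P_{(AW)^k})^{\dag}$ and reduce the problem to identifying the block form of $P_{(AW)^k}$; the difference is in how that identification is made. The paper invokes the identity $P_{(AW)^k}=(AW)(AW)^{\odagger}$ together with a block form for the core-EP inverse $(AW)^{\odagger}$ (appealing to \cite[Lemma~7.7.2]{CaMe} for the index relation $\ind(\Sigma_W W_1\Sigma_A A_1)=k-1$), and then reads off $P_{(AW)^k}=U\,\mathrm{diag}\big(P_{(\widehat A\widehat W)^k},0\big)U^{*}$ directly. You instead compute $R\!\left((AW)^{k}\right)=R\!\left((\widehat A\widehat W)^{k-1}\widehat A\right)$ by the full-row-rank observation on $[W_1\mid W_2]$ and then squeeze $R(P^{\kappa})\subseteq R(P^{j_0-1}\widehat A)\subseteq R(P^{\kappa})$ to match this with $R\!\left((\widehat A\widehat W)^{\kappa}\right)$. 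Your route is more elementary and self-contained (no external index lemma or prior core-EP block form is needed), at the cost of the extra range-stabilization paragraph; the paper's route is shorter but leans on results it only cites.
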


     \begin{proof}
     We have
     $$AW=U\left[ \begin{array}{cc}
               \Sigma_AA_1\Sigma_W W_1 \  & \Sigma_A A_1\Sigma_W W_2 \\
                 0 & 0
             \end{array} \right]U^*$$
             by \cite [Lemma 7.7.2]{CaMe}, $\ind{(\Sigma_W W_1\Sigma_A A_1)}=k-1$, and it is easy to see $(AW)^{\odagger}$ has the following matrix from
             $$(AW)^{\odagger}=U\left[ \begin{array}{cc}
               (\Sigma_AA_1\Sigma_W W_1)^{\odagger} \  & 0 \\
                 0 & 0
             \end{array} \right]U^*.$$
            We known that $(AW)^k((AW)^k)^\dag=(AW)(AW)^{\odagger}$, so we obtain
             $$ (AW)^k((AW)^k)^\dag =U\left[ \begin{array}{cc}
               (\Sigma_AA_1\Sigma_W W_1)(\Sigma_AA_1\Sigma_W W_1)^{\odagger} \  & 0 \\
                 0 & 0
             \end{array} \right]U^*.$$
            this means that
             \begin{equation}
                 (AW)^k((AW)^k)^\dag= U\left[ \begin{array}{cc}
               (\Sigma_AA_1\Sigma_W W_1)^k((\Sigma_AA_1\Sigma_W W_1)^k)^\dag &\ \\
                 0 & 0
             \end{array} \right]U^*.
             \end{equation}
             Thus
             \begin{equation}
                 A^{\odagger,W}= U\left[ \begin{array}{cc}
               [(\Sigma_W W_1\Sigma_A A_1\Sigma_W W_1) (\Sigma_AA_1\Sigma_W W_1)^k((\Sigma_AA_1\Sigma_W W_1)^k)^\dag ]^\dag&\ \\
                 0 & 0
             \end{array} \right]V^*
             \end{equation}
             which is
             $$A^{\odagger,W}=U\left[ \begin{array}{cc}
               (\Sigma_A A_1)^{\odagger, (\Sigma_W W_1)} & 0 \\
                 0 & 0
             \end{array} \right]V^*$$ 
             which is (\ref{TCF01}).
     \end{proof}
\begin{theorem} Let $A,W \in \C^{n\times n}$ with $k= \ind(AW)$. Then the following statements are equivalent
 \begin{enumerate}[a)]
    \item $X=A^{\odagger,W},$
     \item $X= A^{(2)}_{R((AW)^k[(AW)^k]^\dag(WAW)^*),N(WAW[(AW)^k]^*)},$
     \item $XWAW=(WAWP_{(AW)^k})^\dag)^\dag WAW \textit{ and } R(X^*)\subset R(W(AW)^{k+1}),$
     \item $XWAWX=X \textit{ , } WAWX=WAW(WAWP_{(AW)^k})^\dag \textit{ and } XWAW=X(WAWP_{(AW)^k})^\dag,$
     \item $WAWX=P_{W(AW)^{k+1}} \textit{ and } R(X)\subset R(P_{(AW)^k}(WAW)^*).$
 \end{enumerate}
\end{theorem}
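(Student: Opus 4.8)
The plan is to obtain every equivalence by specializing to the pair $C:=WAW$ and $B:=(AW)^k$ the characterizations of $C^{(B)}$ proved in Section~4. First, $A^{\odagger,W}=C^{(B)}$: by Theorem~\ref{v}(b) one has $A^{\odagger,W}=(WAW)^{((AW)^{k'})}$ with $k'=\max\{\ind(AW),\ind(WA)\}$, and since $R((AW)^{k'})=R((AW)^k)$ forces $P_{(AW)^{k'}}=P_{(AW)^k}$, this matrix equals $\big((WAW)P_{(AW)^k}\big)^\dag=\big((WAW)(AW)^k((AW)^k)^\dag\big)^\dag=(WAW)^{((AW)^k)}=C^{(B)}$ (equivalently, $C^{(B)}=\big((WAW)P_{(AW)^k}\big)^\dag=A^{\odagger,W}$ by the index-only theorem above). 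Hence it suffices to rewrite each of the Section~4 systems for this $C$ and $B$; since every one of those systems is known to have $C^{(B)}$ as its \emph{unique} solution, a matrix $X$ satisfies it if and only if $X=C^{(B)}=A^{\odagger,W}$, which is item~(a).

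For the rewriting I would use repeatedly the identities $CB=WAW(AW)^k=W(AW)^{k+1}$, $BB^\dag=P_{(AW)^k}$ (whence $R(CB)=R(W(AW)^{k+1})$ and $P_{CB}=P_{W(AW)^{k+1}}$), $R(BB^\dag C^*)=R\big((AW)^k[(AW)^k]^\dag(WAW)^*\big)=R\big(P_{(AW)^k}(WAW)^*\big)$, and $(CBB^\dag)^\dag=\big((WAW)P_{(AW)^k}\big)^\dag=A^{\odagger,W}$. With these in place: item~(b) is exactly $X=C^{(2)}_{R(BB^\dag C^*),\,N((CB)^*)}$, the base matrix there being $C=WAW$, i.e.\ relation~(\ref{TCF12}); item~(c) is the system $XC=(CBB^\dag)^\dag C$, $R(X^*)\subset R(CB)$ of~(\ref{TCF11}); item~(d) is the characterization $XCX=X$, $CX=C(CBB^\dag)^\dag$, $XC=(CBB^\dag)^\dag C$ proved in Section~4; and item~(e) is the system $CX=P_{CB}$, $R(X)\subset R(BB^\dag C^*)$ from the last theorem of Section~4. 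Substituting $C=WAW$ and $B=(AW)^k$ and simplifying with the identities above reproduces the displayed equations in each case.

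The main obstacle, as I expect it, is bookkeeping rather than a genuinely new argument. One must justify at the outset that the index $k=\ind(AW)$ assumed here may replace the larger $k'=\max\{\ind(AW),\ind(WA)\}$ occurring implicitly in the definition of $A^{\odagger,W}$ — this is precisely the role of $P_{(AW)^k}=P_{(AW)^{k'}}$, i.e.\ of the index-only theorem — and one must verify that the several equivalent descriptions of the ranges and null spaces in (b)--(e) (for instance $N((CB)^*)$ rewritten in the form shown in (b), and $R(CB)=R(W(AW)^{k+1})$ in (c)) really do match, without dropping a conjugate transpose or a Moore--Penrose dagger along the way. Everything else is a direct transcription of the results of Section~4.
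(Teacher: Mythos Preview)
The paper states this theorem without proof; it is evidently meant to be read as the specialization of the Section~4 characterizations of $C^{(B)}$ to $C=WAW$, $B=(AW)^k$, after invoking Theorem~\ref{v}(b) together with the ``index-only'' theorem to justify $A^{\odagger,W}=(WAW)^{((AW)^k)}$ for $k=\ind(AW)$. Your proposal follows precisely this intended route, and your identification of the obstacle as bookkeeping rather than a new idea is accurate --- indeed several of the discrepancies you anticipate (the base matrix in (b), the form of the null space in (b), the third equation in (d), the stray $)^\dag$ in (c)) are typographical slips in the paper's statement rather than something to be argued around.
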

     
    \begin{theorem} Let $ A \in \C^{n \times n}$. Then 
    $A^\diamond$  is the unique solution of the following equations
    \begin{equation}
        XAX=X \textit{,  } AX=P_{A^2} \textit{, } XA=A(A^2)^{\dag} A
    \end{equation}
     \end{theorem}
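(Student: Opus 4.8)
The plan is to read this statement directly off the theorem that establishes (\ref{a14}) and (\ref{a16}), by specializing $B=A$. By the Remark following the Definition, $A^{(A)}=(A\,A\,A^\dag)^\dag=(AP_A)^\dag=A^{\diamond}$, so it suffices to check that putting $B=A$ turns the system (\ref{a16}) into the three displayed equations. When $B=A$ one has $AB=A^2$, hence $P_{AB}=P_{A^2}$ and $B(AB)^\dag A=A(A^2)^\dag A$; thus (\ref{a16}) reads $XAX=X$, $AX=P_{A^2}$, $XA=A(A^2)^\dag A$, which is exactly the system in the statement. Since that theorem asserts $A^{(A)}=A(A^2)^\dag$ is its unique solution and $A^{(A)}=A^{\diamond}$, the conclusion follows immediately.

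For completeness I would also include the short self-contained argument. \emph{Existence.} From (\ref{TCF4})--(\ref{TCF6}) with $B=A$ (or from (\ref{a14})) one gets $A^{\diamond}=A(A^2)^\dag$, whence $AA^{\diamond}=A^2(A^2)^\dag=P_{A^2}$ and $A^{\diamond}A=A(A^2)^\dag A$; moreover $A^{\diamond}=A^{(A)}\in A\{2\}$ by item b) of Theorem \ref{a11}, so $A^{\diamond}AA^{\diamond}=A^{\diamond}$. Hence $A^{\diamond}$ satisfies all three equations. \emph{Uniqueness.} If $X$ satisfies the system, then, using $AX=P_{A^2}=AA^{\diamond}$ and $XA=A(A^2)^\dag A=A^{\diamond}A$,
\[
X=XAX=X(AX)=X(AA^{\diamond})=(XA)A^{\diamond}=A^{\diamond}AA^{\diamond}=A^{\diamond},
\]
so $X=A^{\diamond}$.

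I do not expect any genuine obstacle: the result is a direct corollary once one observes $AB=A^2$ for $B=A$ and recalls $A^{(A)}=A^{\diamond}$. The only points deserving a line of care are matching $XA=A(A^2)^\dag A$ with the term $B(AB)^\dag A$ of (\ref{a16}) (immediate) and noting that the uniqueness argument of the parent theorem, which rests on the identity $AA^{(B)}=P_{AB}$, specializes correctly to $AA^{\diamond}=P_{A^2}$.
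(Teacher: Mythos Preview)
Your proof is correct and follows exactly the paper's approach: the paper's proof is the single line ``It follows from (\ref{a16}) when we replace $B$ by $A$,'' which is precisely your specialization argument using $A^{(A)}=A^{\diamond}$. Your additional self-contained existence/uniqueness verification is sound and just elaborates what the parent theorem already established.
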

     \begin{proof}
         It follows from (\ref{a16}) when we replace $B$ by $A$.
     \end{proof}
\begin{theorem} Let $A, W \in \C^{n \times n}$ be as in (\ref{TCF4}) where $W=B$. Then $A^{\diamond,W}$ can be written as
         \begin{equation}\label{TCF0}
             A^{\diamond,W}=U\left[ \begin{array}{cc}
               (\Sigma_A A_1)^{\diamond,(\Sigma_W W_1)} \  & 0 \\
                 0 & 0
             \end{array} \right]V^*
         \end{equation}
     \end{theorem}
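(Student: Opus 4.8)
Here is a plan for proving the representation (\ref{TCF0}).

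The plan is to reproduce, for the BT inverse, the argument that led to (\ref{TCF01}) for $A^{\odagger,W}$. By part a) of Theorem \ref{v} we have $A^{\diamond,W}=(WAW)^{(AW)}=\big(WAW\cdot AW(AW)^\dag\big)^\dag$, so the first step is to insert the $W$-decomposition of $A$ (that is, (\ref{TCF4}) with $B=W$) and carry out the block products. Since the lower block-row of both $A$ and $W$ is zero, a routine multiplication gives
\[
AW=U\left[\begin{array}{cc}\Sigma_A A_1\Sigma_W W_1 & \Sigma_A A_1\Sigma_W W_2\\ 0 & 0\end{array}\right]U^*,\qquad
WAW=V\left[\begin{array}{cc}\Sigma_W W_1\Sigma_A A_1\Sigma_W W_1 & \Sigma_W W_1\Sigma_A A_1\Sigma_W W_2\\ 0 & 0\end{array}\right]U^*.
\]

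Next I would compute the orthogonal projector $P_{AW}=AW(AW)^\dag$. The nonzero block-row of $AW$ factors as $\Sigma_A A_1\Sigma_W\,[\,W_1\ \ W_2\,]$, and since $[\,W_1\ \ W_2\,]$ has orthonormal rows (because $W_1W_1^*+W_2W_2^*=I_s$) and $\Sigma_W$ is nonsingular, this block-row has the same column space as $\Sigma_A A_1$; hence
\[
P_{AW}=U\left[\begin{array}{cc}(\Sigma_A A_1)(\Sigma_A A_1)^\dag & 0\\ 0 & 0\end{array}\right]U^*.
\]
Multiplying $WAW$ on the right by $P_{AW}$ annihilates the off-diagonal block, and because conjugation by the unitaries and the surrounding zero blocks pass through the Moore-Penrose inverse, one is left with
\[
A^{\diamond,W}=U\left[\begin{array}{cc}\big[(\Sigma_W W_1\Sigma_A A_1\Sigma_W W_1)(\Sigma_A A_1)(\Sigma_A A_1)^\dag\big]^\dag & 0\\ 0 & 0\end{array}\right]V^*.
\]

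The last step, which I expect to be the real difficulty, is to identify the $(1,1)$-block with $(\Sigma_A A_1)^{\diamond,(\Sigma_W W_1)}$. Putting $X=\Sigma_A A_1$ and $Y=\Sigma_W W_1$, the block above equals $\big(YXY\,XX^\dag\big)^\dag$, whereas by definition $(\Sigma_A A_1)^{\diamond,(\Sigma_W W_1)}=X^{\diamond,Y}=\big(YXYXY(XY)^\dag\big)^\dag=\big(YXY\,XY(XY)^\dag\big)^\dag$. Since $(\cdot)^\dag$ is one-to-one, what remains is to prove the identity
\[
YXY\,XX^\dag=YXY\,XY(XY)^\dag ,
\]
that is, that $YXY$ annihilates the subspace $R(X)\cap R(XY)^\perp$. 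This is immediate when $\rk(XY)=\rk(X)$, equivalently $\rk(A_1W_1)=\rk(A_1)$, i.e.\ $R(XY)=R(X)$; all the other ingredients --- the block bookkeeping, the relation $W_1W_1^*+W_2W_2^*=I_s$, and the passage of $\dag$ through the unitary and zero-block structure --- are routine, exactly as in the proof of (\ref{TCF01}). Establishing this identity in the stated generality (or, if necessary, adjoining the rank condition $\rk(A_1W_1)=\rk(A_1)$ to the hypotheses) is the crux of the argument.
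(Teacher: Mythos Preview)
Your route is exactly the one the paper intends: its entire proof is the single line ``From the expression $A^{\diamond,W}=(WAWP_{AW})^\dag$ and (\ref{TCF4}), it follows (\ref{TCF0})''. Your block computations --- in particular the identification $P_{AW}=U\left[\begin{smallmatrix}XX^\dag&0\\0&0\end{smallmatrix}\right]U^*$ obtained from $W_1W_1^*+W_2W_2^*=I_s$, and the resulting form $A^{\diamond,W}=U\left[\begin{smallmatrix}(YXY\,XX^\dag)^\dag&0\\0&0\end{smallmatrix}\right]V^*$ with $X=\Sigma_AA_1$, $Y=\Sigma_WW_1$ --- are precisely the details the paper suppresses, and they are correct.

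Where you hesitate is exactly where the paper's one-line proof hides a real issue. The identity you isolate, $YXY\,XX^\dag=YXY\,XY(XY)^\dag$, is \emph{not} true in general, so (\ref{TCF0}) fails as stated. A concrete instance: take $n=3$, $U=V=I_3$, $\Sigma_A=I_2$, $A_1=I_2$, $A_2=0$, and $\Sigma_W=\mathrm{diag}(\sqrt2,1)$, $W_1=\left[\begin{smallmatrix}1/\sqrt2&1/\sqrt2\\0&0\end{smallmatrix}\right]$, $W_2=\left[\begin{smallmatrix}0\\1\end{smallmatrix}\right]$ (all the constraints of Theorem~\ref{a3} hold). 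Then $X=I_2$, $Y=\left[\begin{smallmatrix}1&1\\0&0\end{smallmatrix}\right]$, $YXY\,XX^\dag=\left[\begin{smallmatrix}1&1\\0&0\end{smallmatrix}\right]$ but $YXY\,P_{XY}=\left[\begin{smallmatrix}1&0\\0&0\end{smallmatrix}\right]$; a direct computation gives $A^{\diamond,W}=\left[\begin{smallmatrix}1/2&0&0\\1/2&0&0\\0&0&0\end{smallmatrix}\right]$ while the right-hand side of (\ref{TCF0}) equals $\left[\begin{smallmatrix}1&0&0\\0&0&0\\0&0&0\end{smallmatrix}\right]$. Thus your diagnosis is correct: the argument (and the statement) go through precisely under the extra hypothesis $\rk(A_1W_1)=\rk(A_1)$, i.e.\ $R(XY)=R(X)$, and your plan then constitutes a complete proof along the paper's intended lines.
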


     \begin{proof}
          From the expression $A^{\diamond, W}=(WAWP_{AW})^\dag$ and (\ref{TCF4}), it follows (\ref{TCF0}).
     \end{proof}
        
\begin{theorem} Let $A,B \in \C^{n\times n}$. Then the following statements are equivalent
 \begin{enumerate}[a)]
    \item $X=A^{\diamond,W},$
     \item $X= A^{(2)}_{R(AW(AW)^\dag(WAW)^*),N((W^*(AW)^2)^*)},$
     \item $XWAW=(WAWP_{AW}^\dag)^\dag A\textit{ and } R(X^*)\subset R(W(AW)^2),$
     \item $XWAWX=X \textit{ , } WAWX=WAW(WAWP_{AW})^\dag \textit{ and } XWAW=X(WAWP_{AW})^\dag,$
     \item $WAWX=P_{W(AW)^2} \textit{ and } R(X)\subset R(P_{AW}(WAW)^*),$
     \item $X=XWAWX \textit{,  } WAWX=XP_{W(AW)^2} \textit{,  } XWAW=AW{(W(AW)^2)}^\dag WAW.$
 \end{enumerate}
\end{theorem}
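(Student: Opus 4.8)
The plan is to deduce the entire chain of equivalences from the characterizations of $A^{(B)}$ already established in Section 4. By Theorem \ref{v}(a) one has $A^{\diamond,W}=(WAW)^{(AW)}$, so, abbreviating $C:=WAW$ and $F:=AW$, the matrix under study is nothing but $C^{(F)}$; each item of the statement will then turn out to be a transcription of one of the earlier characterizations of $C^{(F)}$, once the $C$- and $F$-expressions are rewritten in terms of $A$ and $W$.

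First I would record the elementary reductions coming from $F=AW$. Since $FF^\dag=AW(AW)^\dag=P_{AW}$, we get $CFF^\dag=WAWP_{AW}$ and, because $WAWAW(AW)^\dag=WAW\,P_{AW}$, the identity $(CFF^\dag)^\dag=(WAWP_{AW})^\dag=A^{\diamond,W}$. Likewise $CF=(WAW)(AW)=W(AW)^2$, whence $P_{CF}=P_{W(AW)^2}$, $R(CF)=R(W(AW)^2)$ and $FF^\dag C^*=AW(AW)^\dag(WAW)^*$; moreover $C^{(F)}=F(CF)^\dag=AW\,(W(AW)^2)^\dag$ by (\ref{a14}). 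Feeding these substitutions into the results of Section 4 yields the six equivalences termwise: a) $\Leftrightarrow$ b) from (\ref{TCF12}), which reads $C^{(F)}=C^{(2)}_{R(FF^\dag C^*),\,N((CF)^*)}$; a) $\Leftrightarrow$ c) from the characterization (\ref{TCF11}), namely $XC=(CFF^\dag)^\dag C$ together with $R(X^*)\subset R(CF)$; a) $\Leftrightarrow$ d) from the three-equation characterization $XCX=X$, $CX=C(CFF^\dag)^\dag$, $XC=(CFF^\dag)^\dag C$ of the last theorem of Section 4; a) $\Leftrightarrow$ e) from item c) of that same theorem, $CX=P_{CF}$ with $R(X)\subset R(FF^\dag C^*)$; and a) $\Leftrightarrow$ f) from (\ref{a16}), namely $XCX=X$, $CX=P_{CF}$, $XC=F(CF)^\dag C$.

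I do not expect a genuine obstacle here: the whole substance is the reduction $A^{\diamond,W}=C^{(F)}$ combined with the uniqueness assertions already proved in Section 4. The one spot demanding a little care is verifying that the $C$- and $F$-expressions really collapse to the $A$- and $W$-expressions displayed in the theorem — in particular the identifications $(WAWP_{AW})^\dag=A^{\diamond,W}$ and $CF=W(AW)^2$ — after which each item is an immediate rewriting of its Section 4 counterpart. One could equally well mimic verbatim the proof scheme used for $A^{\odagger,W}$, which is the same argument with $(AW)^k$ in place of $AW$.
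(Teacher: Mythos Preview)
The paper itself provides no proof for this theorem; it is stated immediately after the block-form representation of $A^{\diamond,W}$ and is evidently meant to follow by specializing the Section~4 characterizations of $A^{(B)}$ via Theorem~\ref{v}(a), exactly as you do. Your reduction $A^{\diamond,W}=C^{(F)}$ with $C=WAW$, $F=AW$, together with the identifications $CFF^\dag=WAWP_{AW}$, $CF=W(AW)^2$, $FF^\dag C^*=P_{AW}(WAW)^*$ and $F(CF)^\dag=AW(W(AW)^2)^\dag$, is correct and matches the intended argument.

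One remark worth making explicit in your write-up: the theorem as printed contains several typographical slips that your transcription silently fixes. In item~b) the outer-inverse should be of $WAW$ (not of $A$), and the null space should be $N((W(AW)^2)^*)$ rather than $N((W^*(AW)^2)^*)$; in item~c) the right-hand side should read $(WAWP_{AW})^\dag\,WAW$; in item~d) the last equation should be $XWAW=(WAWP_{AW})^\dag\,WAW$; and in item~f) the middle equation should be $WAWX=P_{W(AW)^2}$ (no extra $X$). With these corrections, each item is verbatim the corresponding Section~4 statement for $C^{(F)}$, so your plan goes through without obstruction.
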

\section{Acknowledgements}
The authors would like to thank the anonymous referees for their careful reading of the paper and their valuable comments and suggestions that help us to improve the reading of the paper.
\section{Funding}
The second author has been partially supported by Universidad Nacional de La Pampa, Facultad de Ingenier\'ia (Grant Resol. Nro. 135/19), Universidad Nacional del Sur, Argentina (Grant Resol. PGI 24/ZL22) and Ministerio de Econom\'ia, Industria y Competitividad (Spain) [Grant Red de Excelencia RED2022-134176-T].




\begin{thebibliography}{33}

\bibitem{BaTr} O.M. Baksalary, G. Trenkler, Core inverse of matrices, Linear. Multi. Algebra. \textbf{58} (6) (2010) 681-697.
\bibitem{BaTr2} O.M. Baksalary, G. Trenkler, On a generalized core inverse, Appl. Math. Comput. \text{236} (2014) 450-457.
\bibitem{BeGr} A. Ben-Israel, T.N.E. Greville, Generalized Inverses: Theory and Applications, second ed., Springer-Verlag, New York, 2003.
\bibitem{Bo} R. Bouldin, The pseudo-inverse of a product. SIAM J. Appl. Math. \textbf{24} (4)(1973) 489–495.
\bibitem{CaMe} S.L. Campbell, C.D. Meyer Jr, Generalized Inverse of Linear Transformation,. Dover, New York, Second Edition, 1991.
\bibitem{FeLeTh1} F.E. Ferreyra, F.E. Levis, N. Thome, Revisiting of the core EP inverse and its extension to rectangular matrices,  Quaest. Math. \textbf{41} (2) (2018) 265-281. 
\bibitem{FeThTo} D.E. Ferreyra, N. Thome, C. Torigino, The $W$-weighted BT inverse, Quaest. Math. \textbf{46} (2023) 359-374.
\bibitem{GaChPa} Y. Gao, J. Chen, P. Patricio, Representations and properties of the W-weighted core-EP inverse, Linear Multi. Algebra. \textbf{68} (2018) 1-15.
\bibitem{MaTh} S. Malik, N. Thome, On a new generalized inverse for matrices of an arbitrary index, Appl. Math. Comput. \textbf{226} (2014) 575-580.
\bibitem{MoKo} D. Mosic, M.Z. Kolundzija, Weighted CMP inverse of a operator between Hilbert spaces. Rev. R. Acad. Cienc. Exactas Fis. Nat. Ser. A Math. \textbf{113} (2019) 2155-2173.

\bibitem{PrMo} K.M. Prasad, K.S. Mohana, Core EP inverse, Linear. Multi. Algebra. \textbf{62} (2014) 792-802.
\bibitem{Th} N. Thome, A simultaneous canonical form of a pair of matrices and applications involving the weighted Moore–Penrose inverse, Appl. Math. Letters. \textbf{53} (2016) 112-118.
\bibitem{Wang} X. Wang, Core-EP decomposition and its applications, Linear Algebra Appl. \textbf{508} (2016) 289-300.

\bibitem{ZuCh} K.Z. Zue, Y.J Chen, The new Revisitation of core-EP inverse of Matrices, Filomat. \textbf{33} (2019) 3061–3072. 
\end{thebibliography}
\end{document}